\newtheorem{theorem}{Theorem}[section]
\newtheorem*{theorem*}{Theorem}
\newtheorem{lemma}[theorem]{Lemma}
\newtheorem{corollary}[theorem]{Corollary}
\newtheorem*{conjecture*}{Conjecture}
\theoremstyle{remark}
\newtheorem{remark}[theorem]{Remark}
\newtheorem{example}[theorem]{Example}
\theoremstyle{definition}
\newcommand{\ie}{{\em i.e.~}\ }
\newcommand{\confer}{{\em cf.~}\ }
\newcommand{\opname}[1]{\operatorname{\mathsf{#1}}}
\renewcommand{\mod}{\opname{mod}\nolimits}
\newcommand{\grmod}{\opname{grmod}\nolimits}
\newcommand{\proj}{\mathrm{proj}}
\newcommand{\inj}{\mathrm{inj}}
\newcommand{\Mod}{\opname{Mod}\nolimits}
\newcommand{\rep}{\opname{rep}\nolimits}
\newcommand{\Rep}{\opname{Rep}\nolimits}
\newcommand{\thick}{\opname{thick}\nolimits}
\newcommand{\per}{\opname{per}\nolimits}
\newcommand{\id}{\mathrm{id}}
\newcommand{\Hom}{\opname{Hom}}
\newcommand{\ten}{\otimes}
\newcommand{\lten}{\overset{\opname{L}}{\ten}}
\newcommand{\ca}{{\mathcal A}}
\newcommand{\cb}{{\mathcal B}}
\newcommand{\cc}{{\mathcal C}}
\newcommand{\cd}{{\mathcal D}}
\newcommand{\cf}{{\mathcal F}}
\newcommand{\ch}{{\mathcal H}}
\numberwithin{equation}{section}
\begin{document}

\title[]{The derived category of an algebra with radical square zero}

\author{Dong Yang}
\address{Dong Yang, Department of Mathematics, Nanjing University, Nanjing 210093, P. R. China}
\email{yangdong@nju.edu.cn}

\date{\today}

\begin{abstract}
Koszul duality and covering theory are combined to realise the bounded derived category $\cd$ of an algebra with radical square zero as a certain orbit category of the bounded derived category of finitely presented representations of an associated infinite quiver. As a consequence, the possible shapes of the connected components of the Auslander--Reiten quiver of $\cd$ are described.\\
{\bf Key words:} algebra with radical square zero, quiver representation, derived category, orbit category, Koszul duality.\\ 
{\bf MSC 2010:} 16E35, 16E45, 18E30.
\end{abstract}

\maketitle

\section{Introduction}

Let $Q$ be a finite connected quiver and $A$ be the algebra with radical square zero associated to $Q$. The bounded derived category $\cd^b(\mod A)$ of finite-dimensional right $A$-modules was studied in \cite{BautistaLiu06,BautistaLiu17} using quiver representations and in \cite{BekkertDrozd09} using free boxes. In this paper we provide a third approach which is similar to but more structural and less explicit than the approach in \cite{BautistaLiu06,BautistaLiu17}: by Koszul duality $\cd^b(\mod A)$ is triangle equivalent to the perfect derived category of a graded hereditary algebra, which, by covering theory, is equivalent to a certain orbit category of the derived category of representations over an infinite quiver. More precisely, we construct an infinite quiver $P$ with a quiver-automorphism $\sigma$ and establish a triangle equivalence 
\[\cd^b(\rep^+(P))/\Sigma(\sigma^*)^{-1}\longrightarrow\cd^b(\mod A),\]
where $\rep^+(P)$ is the category of finitely presented representations over $P$, $\Sigma$ is the suspension functor, $\sigma^*$ is the pull-back along $\sigma$ and the category on the left is the orbit category of $\cd^b(\rep^+(P))$ with respect to the automorphism $\Sigma(\sigma^*)^{-1}$. This reduces the study of $\cd^b(\mod A)$ to the study of the representation theory of $P$ (representation theory of infinite quivers is studied in \cite{BautistaLiuPaquette13}). In particular, the above equivalence induces a bijection between the set of isomorphism classes of finitely presented indecomposable representations over $P$ and the set of isomorphism classes of indecomposable objects of $\cd^b(\mod A)$.  The quiver $P$ is opposite to the quiver $Q^{\Box}$ in \cite[Section 2]{BekkertDrozd09} and is the quiver $Q^{\mathbb{Z}}$ in \cite[Section 7]{BautistaLiu14}. Let $r\in\mathbb{N}\cup\{0\}$ be the grading period of $Q$ (which measures how far $Q$ is from being gradable, see Section~\ref{ss:quiver} for the precise definition) and $\tilde{Q}$ be the minimal gradable covering of $Q$ in the sense of \cite{BautistaLiu06}. If $r=0$, then $\tilde{Q}=Q$ and $P$ is the disjoint union of $\mathbb{Z}$ copies of $Q$; if $r\geq 1$, then $P$ is  the disjoint union of $r$ copies of $\tilde{Q}$. As a consequence, when $r=0$, there is a triangle equivalence 
\[
\cd^b(\rep(Q))\longrightarrow \cd^b(\mod A);
\]
when $r\geq 1$, there is a quiver-automorphism $s$ of $\tilde{Q}$ and a triangle equivalence 
\[\cd^b(\rep^+(\tilde{Q}))/\Sigma^r(s^*)^{-1}\longrightarrow\cd^b(\mod A).\]
The latter equivalence clarifies further the relationship between $\cd^b(\rep^+(\tilde{Q}))$ and $\cd^b(\mod A)$, which was first studied in \cite{BautistaLiu06,BautistaLiu17}.  As a consequence, we can use the general results on representation theory of infinite quivers established in \cite{BautistaLiuPaquette13} to describe the possible shapes of connected components of the Auslander--Reiten quiver of $\cd^b(\mod A)$, parallel to \cite[Section 5]{BautistaLiu17}.
   
We prove the above results more generally for graded quivers $Q$ satisfying certain conditions (see Section~\ref{ss:D-as-quiver-repre}), including non-positively graded quivers, with $\cd^b(\mod A)$ replaced by the thick subcategory $\cd$ of the derived category of right dg $A$-modules generated by $A/\mathrm{rad}(A)$ (Corollaries~\ref{cor:D-as-orbit-quiver-description}, \ref{cor:D-as-orbit-quiver-description-r=0}, and~\ref{cor:D-as-orbit-quiver-description-3} and Theorem~\ref{thm:AR-quiver}).

\smallskip
Throughout this paper let $k$ be a field.

\medskip
\noindent\emph{Acknowledgement.} The author acknowledges support by the National Natural Science Foundation of China No. 11401297 and a project funded by the Priority Academic Program Development of Jiangsu Higher Education Institutions. He thanks a referee for very helpful remarks.

\section{Orbit categories}

In this section we discuss orbit categories. Results in Sections~\ref{ss:contracting-orbits-1} and~\ref{ss:contracting-orbits-2} will be used in Section~\ref{ss:D-as-quiver-repre}: there is a quiver $P$ which is the disjoint union of copies of another quiver $\tilde{Q}$; we will apply Lemmas~\ref{lem:contracting-orbits} and~\ref{lem:contracting-objects-2} to identify a certain orbit category associated to $P$ with a suitable orbit category associated to $\tilde{Q}$. Results in Section~\ref{ss:AR-triangle-in-orbit-category} will be used in Section~\ref{ss:AR-quiver} to describe the possible shapes of the connected components of the Auslander--Reiten quiver of the bounded derived category of an algebra with radical square zero.

\medskip

Let $\ca$ be an additive $k$-category and $\Phi$ be a $k$-linear auto-equivalence of $\ca$. Let $\ca/\Phi$ be the \emph{orbit category} of $\ca$ with respect to $\Phi$ (\cite{Keller05}): it has the same objects as $\ca$, and the
morphism space from $X$ to $Y$ is defined by
\[\Hom_{\ca/\Phi}(X,Y)
=\bigoplus_{p\in\mathbb{Z}}\Hom_{\ca}(X,\Phi^p Y).\] 
If $\ca$ is idempotent complete, so is $\ca/\Phi$. In this case, an object $X$ is indecomposable in $\ca$ if and only if it is indecomposable in $\ca/\Phi$.

Let $\pi=\pi_\ca:\ca\to\ca/\Phi$ denote the projection functor.

\subsection{}\label{ss:contracting-orbits-1}
 Fix $r\in\mathbb{N}$ and let $\cb$ be the direct sum of $r$ copies of $\ca$. Precisely, the objects of $\cb$ are $r$-tuples $(X_0,\ldots,X_{r-1})$ of objects of $\ca$, and for two objects $(X_0,\ldots,X_{r-1})$ and $(Y_0,\ldots,Y_{r-1})$ the morphism space is 
\[
\Hom_{\cb}((X_0,\ldots,X_{r-1}),(Y_0,\ldots,Y_{r-1}))=\bigoplus_{i=0}^{r-1}\Hom_\ca(X_i,Y_i).
\]
For $0\leq i\leq r-1$, let $\ca_i$ be the full subcategory of $\cb$ consisting of  the objects $(X_0,\ldots,X_{r-1})$ with $X_j=0$ for $j\neq i$. Clearly $\ca_i$ is isomorphic to $\ca$.
Let $\Psi$ be a $k$-linear auto-equivalence of $\cb$ which restricts to equivalences $\Psi_i:\ca_i\to\ca_{i+1}$ for $0\leq i\leq r-1$ (where the indices are read modulo $r$). Then $\Psi^r$ restricts to a $k$-linear auto-equivalence of $\ca_0$, which induces a $k$-linear auto-equivalence of $\ca$. Let $\Phi$ be this auto-equivalence (in some sense $\Psi$ is an $r$-th root of $\Phi$). 
Let $\iota:\ca\to\ca_0\to\cb$ be the embedding of $\ca$ into $\cb$ as the $0$-th component. Then $\Psi^r \iota=\iota\Phi$.

\begin{lemma}\label{lem:contracting-orbits}
There is a commutative diagram of $k$-linear functors
\[
\xymatrix{\ca \ar[r]^\iota \ar[d]^{\pi_\ca} & \cb\ar[d]^{\pi_\cb}\\
\ca/\Phi \ar[r]^{\bar{\iota}} & \cb/\Psi,
}
\]
where $\bar{\iota}$ is an equivalence.
\end{lemma}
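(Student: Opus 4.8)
The plan is to write down $\bar{\iota}$ by hand and then check that it is fully faithful and dense; a fully faithful dense functor is an equivalence. On objects the commutativity of the square forces $\bar{\iota}(X)=\iota(X)=(X,0,\dots,0)$ (both $\pi_\ca$ and $\pi_\cb$ are the identity on objects), and this choice makes the square commute on objects. For morphisms I will exploit the identity $\Psi^r\iota=\iota\Phi$: iterating it, and fixing a quasi-inverse of $\Psi$ to make sense of negative powers, gives natural isomorphisms $\Psi^{rp}\iota\cong\iota\Phi^p$ for all $p\in\Z$. Since $\iota$ embeds $\ca$ as the full subcategory $\ca_0$ of $\cb$ and $\Psi^q\iota Y$ lies in $\ca_{q\bmod r}$, the space $\Hom_\cb(\iota X,\Psi^q\iota Y)$ vanishes unless $r\mid q$, while for $q=rp$ it equals $\Hom_\cb(\iota X,\iota\Phi^p Y)\cong\Hom_\ca(X,\Phi^p Y)$ because $\iota$ is fully faithful. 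Summing over $q$ (equivalently, over $p$) gives an identification
\[
\Hom_{\cb/\Psi}(\iota X,\iota Y)=\bigoplus_{q\in\Z}\Hom_\cb(\iota X,\Psi^q\iota Y)\;\cong\;\bigoplus_{p\in\Z}\Hom_\ca(X,\Phi^p Y)=\Hom_{\ca/\Phi}(X,Y),
\]
and I define $\bar{\iota}$ on morphisms through this identification. It is clearly $k$-linear and additive, and a short computation using $\Psi^{rp}\iota=\iota\Phi^p$ once more shows that it is compatible with the composition of "twisted'' morphisms in the two orbit categories; hence $\bar{\iota}$ is a $k$-linear functor, it is fully faithful by construction, and $\bar{\iota}\pi_\ca=\pi_\cb\iota$.

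It remains to prove that $\bar{\iota}$ is dense. An arbitrary object $(X_0,\dots,X_{r-1})$ of $\cb$ is the direct sum, inside $\cb$, of the objects $X_i$ regarded in $\ca_i$. For each $i$ the functor $\Psi^i$ restricts to an equivalence $\ca_0=\iota(\ca)\to\ca_i$, so there is $W_i\in\ca$ with $\Psi^i\iota W_i\cong X_i$ in $\cb$; applying $\pi_\cb$ together with the canonical isomorphism $\pi_\cb\Psi\cong\pi_\cb$ attached to the orbit category $i$ times yields $\pi_\cb(X_i)\cong\pi_\cb(\Psi^i\iota W_i)\cong\pi_\cb(\iota W_i)=\bar{\iota}(\pi_\ca W_i)$ in $\cb/\Psi$. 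As $\bar{\iota}$ and $\pi_\ca$ are additive, $\pi_\cb(X_0,\dots,X_{r-1})=\bigoplus_i\pi_\cb(X_i)\cong\bar{\iota}\bigl(\pi_\ca(\bigoplus_i W_i)\bigr)$, so every object of $\cb/\Psi$ is isomorphic to one in the image of $\bar{\iota}$. Thus $\bar{\iota}$ is dense, and being also fully faithful it is an equivalence.

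I do not expect a serious obstacle; the one point that needs genuine care is the coherence bookkeeping that orbit categories carry when $\Phi$ and $\Psi$ are auto-equivalences rather than strict automorphisms — namely choosing compatible quasi-inverses and natural isomorphisms $\Psi^{rp}\iota\cong\iota\Phi^p$, $\Phi^{p}\Phi^{q}\cong\Phi^{p+q}$ and $\pi_\cb\Psi\cong\pi_\cb$ so that the identification of Hom-spaces above is strictly compatible with composition. If one prefers to bypass the explicit construction, the functor $\bar{\iota}$ can instead be obtained from the universal property of $\pi_\ca\colon\ca\to\ca/\Phi$: from $\pi_\cb\iota\Phi\cong\pi_\cb\Psi^r\iota\cong\pi_\cb\iota$ the pair consisting of $\pi_\cb\iota$ and this isomorphism factors through $\pi_\ca$ to give $\bar{\iota}$ with $\bar{\iota}\pi_\ca\cong\pi_\cb\iota$, after which only full faithfulness and density remain, to be checked exactly as above.
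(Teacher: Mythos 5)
Your proposal is correct and follows essentially the same route as the paper: define $\bar{\iota}$ on objects by $\iota$, identify $\Hom_{\cb/\Psi}(\iota X,\iota Y)$ with $\Hom_{\ca/\Phi}(X,Y)$ using that $\Hom_\cb(\iota X,\Psi^q\iota Y)$ vanishes unless $r\mid q$ together with $\Psi^r\iota=\iota\Phi$ and full faithfulness of $\iota$, and prove density by writing $(X_0,\ldots,X_{r-1})$ as a sum of objects $\Psi^i\iota W_i$ which become isomorphic to $\iota W_i$ in the orbit category. Your extra remarks on coherence for quasi-inverses and on the universal-property alternative are reasonable but do not change the argument, which matches the paper's proof.
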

\begin{proof}
We define $\bar{\iota}:\ca/\Phi\to\cb/\Psi$: on objects $\bar{\iota}(X)=\iota(X)$ and on morphisms
\begin{align*}
\Hom_{\ca/\Phi}(X,Y)&=\bigoplus_{p\in\mathbb{Z}}\Hom_\ca(X,\Phi^p Y)\\
&\stackrel{\iota}{\to} \bigoplus_{p\in\mathbb{Z}}\Hom_{\cb}(\iota(X),\iota\Phi^p(Y))\\
&=\bigoplus_{p\in\mathbb{Z}}\Hom_{\cb}(\iota(X),\Psi^{pr}\iota(Y))\\
&\hookrightarrow\bigoplus_{p\in\mathbb{Z}}\Hom_{\cb}(\iota(X),\Psi^p\iota(Y))\\
&=\Hom_{\cb/\Psi}(\bar{\iota}(X),\bar{\iota}(Y)).
\end{align*}
It is fully faithful because $\iota$ is fully faithful and $\Hom_\cb(\iota(X),\Psi^p \iota(Y))=0$ unless $p$ is a multiple of $r$.
Moreover, for an object $(X_0,\ldots,X_{r-1})$ of $\cb$ there exist $X_1',\ldots,X_{r-1}'\in\ca$ such that $(X_0,\ldots,X_{r-1})$ is isomorphic in $\cb$ to $\iota(X_0)\oplus \Psi\iota(X_1')\oplus\ldots\oplus\Psi^{r-1} \iota(X'_{r-1})$, which is isomorphic to $\iota(X_0\oplus X'_1\oplus\ldots\oplus X'_{r-1})$ in $\cb/\Psi$. So $\bar{\iota}$ is dense. Therefore, $\bar{\iota}$ is a $k$-linear equivalence. The commutativity of the diagram is clear from the definition of $\bar{\iota}$.
\end{proof}

\subsection{}\label{ss:contracting-orbits-2}
Let $\cb$ be the direct sum of $\mathbb{Z}$ copies of $\ca$. Precisely, the objects of $\cb$ are $\mathbb{Z}$-tuples $(X_i)_{i\in\mathbb{Z}}$ of objects of $\ca$, where all but finitely many $X_i$ are $0$, and for two objects $X=(X_i)_{i\in\mathbb{Z}}$ and $Y=(Y_i)_{i\in\mathbb{Z}}$ the morphism space is
\[
\Hom_\cb(X,Y)=\bigoplus_{i\in\mathbb{Z}}\Hom_\ca(X_i,Y_i).
\]
For $i\in\mathbb{Z}$, let $\ca_i$ be the full subcategory of $\cb$ consisting of the objects $X=(X_j)_{j\in\mathbb{Z}}$ with $X_j=0$ for $j\neq i$. Then $\ca_i$ is isomorphic to $\ca$. Let $\Psi$ be a $k$-linear auto-equivalence of $\cb$ which restricts to equivalences $\Psi_i:\ca_i\to\ca_{i+1}$ for all $i\in\mathbb{Z}$. Let $\iota:\ca\to\ca_0\to\cb$ be the embedding of $\ca$ into $\cb$ as the $0$-th component.

\begin{lemma}\label{lem:contracting-objects-2}
The composition
\[
\bar{\iota}:\ca\stackrel{\iota}{\longrightarrow}\cb\stackrel{\pi_\cb}{\longrightarrow}\cb/\Psi
\]
is an equivalence.
\end{lemma}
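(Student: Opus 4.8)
The plan is to follow the proof of Lemma~\ref{lem:contracting-orbits} almost verbatim, the one structural difference being that $\cb$ now carries no finite period: since $\Psi$ shifts the index by one across all of $\mathbb{Z}$, the orbit category $\cb/\Psi$ identifies \emph{every} copy $\ca_i$ with one copy of $\ca$, so there is no auxiliary auto-equivalence $\Phi$ on the target and one expects $\bar{\iota}$ itself (rather than an induced functor between orbit categories) to be the equivalence.

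First I would check full faithfulness. For $X,Y\in\ca$,
\[
\Hom_{\cb/\Psi}(\bar{\iota} X,\bar{\iota} Y)=\bigoplus_{p\in\mathbb{Z}}\Hom_\cb(\iota X,\Psi^p\iota Y).
\]
Because $\Psi$ restricts to equivalences $\Psi_i\colon\ca_i\to\ca_{i+1}$, the object $\Psi^p\iota Y$ lies in $\ca_p$ while $\iota X$ lies in $\ca_0$, and since distinct $\ca_i$ are orthogonal direct summands of $\cb$ only the $p=0$ term survives; it equals $\Hom_\ca(X,Y)$ as $\iota$ is fully faithful. Hence $\bar{\iota}$ is fully faithful.

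Next, density. Given $X=(X_i)_{i\in\mathbb{Z}}$ in $\cb$ with only finitely many $X_i\neq 0$, I would write $X\cong\bigoplus_i\iota_i(X_i)$ in $\cb$, where $\iota_i\colon\ca\to\ca_i\hookrightarrow\cb$ is the $i$-th inclusion, and use the equivalence $\ca_0\to\ca_i$ induced by a suitable power of $\Psi$ (or its quasi-inverse) to pick $X_i'\in\ca$, only finitely many nonzero, with $\Psi^i\iota(X_i')\cong\iota_i(X_i)$ in $\cb$. Since the functor induced by $\Psi$ on $\cb/\Psi$ is isomorphic to the identity --- a standard feature of orbit categories --- we get
\[
\pi_\cb(X)\cong\bigoplus_i\pi_\cb(\Psi^i\iota X_i')\cong\bigoplus_i\pi_\cb(\iota X_i')=\bar{\iota}\Bigl(\bigoplus_i X_i'\Bigr),
\]
so $\bar{\iota}$ is dense, hence an equivalence.

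I do not expect a genuine obstacle: the argument is a direct simplification of Lemma~\ref{lem:contracting-orbits}. The only two points needing a word of care are the orthogonality of the components $\ca_i$ in $\cb$ (which collapses the Hom-computation to the summand $p=0$) and the isomorphism between the identity and the functor induced by $\Psi$ on $\cb/\Psi$ (which is precisely what makes $\bar{\iota}$ dense without any $\Phi$ on the target).
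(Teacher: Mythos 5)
Your proposal is correct and follows essentially the same route as the paper: full faithfulness via the orthogonality of the components $\ca_i$ (so only the $p=0$ summand of $\bigoplus_p\Hom_\cb(\iota X,\Psi^p\iota Y)$ survives) together with full faithfulness of $\iota$, and density by decomposing an object of $\cb$ as a finite direct sum $\bigoplus_i\Psi^i\iota(X_i')$ and collapsing the $\Psi$-twists in the orbit category. No gaps.
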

\begin{proof}
On morphisms $\bar{\iota}$ is the embedding
\[
\Hom_\ca(X,Y)\longrightarrow\Hom_\cb(\iota(X),\iota(Y))\longrightarrow\bigoplus_{p\in\mathbb{Z}}\Hom_{\cb}(\iota(X),\Psi^p(\iota(Y))).
\]
This is an isomorphism because $\iota$ is fully faithful and $\Hom_\cb(\iota(X),\Psi^p(\iota(Y)))=0$ unless $p=0$. So $\bar{\iota}$ is fully faithful. Moreover, for an object $(X_i)_{i\in\mathbb{Z}}$ of $\ca$, there exist $X'_i\in \ca$ ($i\in\mathbb{Z}$) such that $(X_i)_{i\in\mathbb{Z}}$ is isomorphic in $\cb$ to $\bigoplus_{i\in\mathbb{Z}}\Psi^i\iota(X'_i)$ (note that this is a finite direct sum), which is isomorphic to $\iota(\bigoplus_{i\in\mathbb{Z}}X'_i)$ in $\cb/\Psi$. So $\bar{\iota}$ is dense. Therefore, $\bar{\iota}$ is an equivalence.
\end{proof}

\subsection{}\label{ss:AR-triangle-in-orbit-category}

Assume that $\ca$ is a Hom-finite Krull--Schmidt triangulated $k$-category and $\Phi$ is a $k$-linear triangle auto-equivalence of $\ca$. 
Assume further that
\begin{itemize}
\item[--] $\ca/\Phi$ is triangulated such that $\pi:\ca\to\ca/\Phi$ is a triangle functor;
\item[--] for any objects $X$ and $Y$ of $\ca$, the space $\Hom_\ca(X,\Phi^p Y)$ vanishes for almost all $p\in\mathbb{Z}$.
\end{itemize}
We remark that the first assumption is very strong. It is satisfied in the hereditary situation, see \cite[Theorems 1 and 6]{Keller05}. In general $\ca/\Phi$ is not triangulated. However, if $\ca$ is algebraic (\ie $\ca$ is triangle equivalent to the homotopy category of a pretriangulated dg category) and $\Phi$ admits a dg lift, then there is a triangulated $k$-category $\widetilde{\ca/\Phi}$, called a \emph{triangulated hull} of $\ca/\Phi$, such that $\ca/\Phi$ embeds into and generates $\widetilde{\ca/\Phi}$ and $\pi$ extends to a triangle functor $\ca\to\widetilde{\ca/\Phi}$, see \cite[Sections 5 and 9]{Keller05}.

It follows from the assumptions that $\ca/\Phi$ is Hom-finite and Krull--Schmidt, and there is a one-to-one correspondence between $\Phi$-orbits of isomorphism classes of indecomposable objects of $\ca$ and isomorphism classes of indecomposable objects of $\ca/\Phi$.
Moreover, the image of an Auslander--Reiten triangle in $\ca$ under $\pi$ is an Auslander--Reiten triangle in $\ca/\Phi$, and up to isomorphism all Auslander--Reiten triangles in $\ca/\Phi$ are of this form. Therefore, the Auslander--Reiten quiver of $\ca/\Phi$ is the orbit quiver of the Auslander--Reiten quiver of $\ca$ under the automorphism induced by $\Phi$.
We remark that here we do not require that Auslander--Reiten triangles exist for the whole category $\ca$.

\section{Quivers and derived categories}

In this section we recall the basic definitions and notation on graded quivers and derived category of dg modules. 

\subsection{Quivers}\label{ss:quiver}
Let $Q$ be a graded quiver (we consider an (ungraded) quiver as a graded quiver whose arrows are in degree $0$). We denote by $Q_0$ the set of vertices of $Q$ and $Q_1$ the set of arrows of $Q$. For $\alpha\in Q_1$, denote by $s(\alpha)$ and $t(\alpha)$ the source and the target of $\alpha$, respectively, and denote by $|\alpha|$ the degree of $\alpha$. For each $i\in Q_0$, we have a trivial path $e_i$ of degree $0$ with $s(e_i)=t(e_i)=i$.  
For $\alpha\in Q_1$, we introduce its formal inverse $\alpha^{-1}$ with $s(\alpha^{-1})=t(\alpha)$,  $t(\alpha^{-1})=s(\alpha)$ and $|\alpha^{-1}|=-|\alpha|$. 
A \emph{walk} is a formal product $w=c_m\cdots c_1$, where each $c_l$ is either a trivial path, or an arrow, or the inverse of an arrow such that $t(c_l)=s(c_{l+1})$ for $1\leq l\leq m-1$. Define $s(w)=s(c_1)$ as the source of $w$ and $t(w)=t(c_m)$ as the target of $w$. We say that $w$ is a \emph{closed walk} if $s(w)=t(w)$. The \emph{virtual degree} of $w$ is defined as $d(w)=\sum_{l=1}^m d(c_l)$, where for $\alpha\in Q_1$ we define $d(\alpha)=1-|\alpha|$ and $d(\alpha^{-1})=|\alpha|-1$ and for $i\in Q_0$ we define $d(e_i)=0$. The \emph{grading period} of $Q$ is defined as $0$ if all closed walks are of virtual degree $0$, and otherwise as the minimal positive integer $r$ such that there are closed walks of virtual degree $r$.  For example, the grading period of the Jordan quiver with the unique arrow in degree $-1$ is  $2$. Note that the virtual degree of any closed walk is a multiple of $r$. The definition of grading period is generalised from \cite[Definition 1.6]{BautistaLiu06} and \cite[Definition 7.3]{BautistaLiu14}, which was for $Q$ concentrated in degree $0$.

Assume that $Q$ is finite, \ie both $Q_0$ and $Q_1$ are finite sets. A non-trivial path of $Q$ is a walk $\alpha_m\cdots\alpha_1$, where all $\alpha_l$ are arrows. It is of degree $|\alpha_1|+\ldots+|\alpha_m|$. 
The \emph{path algebra} $kQ$ of $Q$ is the graded $k$-algebra which has all paths of $Q$ (including the trivial paths) as basis and the multiplication is given by concatenation of paths. So the degree $p$ component of $kQ$ has all paths of degree $p$ as basis. The \emph{complete path algebra} $\widehat{kQ}$ of $Q$ is the completion of $kQ$ in the category of graded $k$-algebras with respect to the $J$-adic topology, where $J$ is the ideal of $kQ$ generated by all arrows. So the degree $p$ component of $\widehat{kQ}$ consists of (possibly infinite) sums of all paths of degree $p$. For example, the complete path algebra of the Jordan quiver with the unique arrow in degree $2$ is isomorphic to the graded polynomial algebra $k[x]$ with $x$ in degree $2$. We refer to \cite[Section II.3]{CaenepeelVanOystaeyen88} for the theory on completions of graded rings.

\subsection{Derived categories}\label{ss:derived-category}

Let $A$ be a graded $k$-algebra, which is viewed as a dg $k$-algebra with trivial differential. Denote by $\cd(A)$ the derived category of right dg $A$-modules (for example, graded right $A$-modules are right dg $A$-modules). It is a triangulated $k$-category with suspension functor $\Sigma$ being the shift of complexes. See~\cite{Keller94,Keller06d}. For an object $M$ of $\cd(A)$, denote by $\thick(M)$ the thick subcategory of $\cd(A)$ generated by $M$, \ie the smallest triangulated subcategory of $\cd(A)$ which contains $M$ and which is closed under taking direct summands. Let $\per(A):=\thick(A_A)$ denote the thick subcategory of $\cd(A)$ generated by the free right dg $A$-module of rank $1$ and let $\cd_{fd}(A)$ denote the full (triangulated) subcategory of $\cd(A)$ consisting of those right dg $A$-modules with finite-dimensional total cohomology. If $A$ is concentrated in degree $0$ and is finite-dimensional over $k$, then $\cd(A)=\cd(\Mod A)$ and  there is a natural triangle equivalence between  $\cd^b(\mod A)$ and $\cd_{fd}(A)$.

\section{The derived category of an algebra with radical square zero}\label{s:radical-square-zero-alg}

In this section we combine Koszul duality and covering theory to construct two quivers $P$ and $\tilde{Q}$ and describe the derived category $\cd$ of an algebra $A$ with radical square zero as certain orbit categories of derived categories of representations over $P$ and $\tilde{Q}$. Moreover, we use the work \cite{BautistaLiuPaquette13} to describe the possible shapes of the connected components of the Auslander--Reiten quiver of $\cd$.

\medskip
Fix a finite graded quiver $Q$. Let $J$ be the ideal of the path algebra $kQ$ generated by all arrows and let $A=kQ/J^2$.  We denote by $\bar{J}$ the image of $J$ in $A$ under the surjective homomorphism $kQ\to A$. Let $K=kQ_0$, which is the direct product of finitely many copies of $k$. It is a subalgebra of $A$ and also there is a surjective homomorphism $A\to A/\bar{J}=K$. We view $K$ as a graded right $A$-module via this homomorphism and denote it by $S$ to avoid confusion.

Recall from Section~\ref{ss:derived-category} that $\cd(A)$ denotes the derived category of right dg $A$-modules. We emphasise that here we consider $A$ as a dg algebra with trivial differential. We are interested in the thick subcategory $\cd=\thick(S)$ of $\cd(A)$ generated by $S$, in particular, a description of the category in terms of quiver representations. 
If all arrows of $Q$ are in positive degrees (respectively, non-positive degrees), it follows from \cite[Lemma 4.3]{SuYang16} (respectively, \cite[Proposition 2.1 (b)]{KalckYang16}) that $\cd=\cd_{fd}(A)$. Recall  that if $A$ is concentrated in degree $0$, then $\cd_{fd}(A)$ is naturally triangle equivalent to $\cd^b(\mod A)$. This  case was studied in~\cite{BautistaLiu06,BekkertDrozd09}.

\subsection{Koszul duality}
In this subsection we recall Koszul duality \cite[Section 10]{Keller94} for $A$ as a dg algebra with trivial differential. Its connection to classical Koszul duality \cite{BeilinsonGinzburgSoergel96} on graded Koszul algebras can be found in \cite[Section 2.2]{Keller02} and \cite[Section 2.3]{LuPalmieriWuZhang08}.

Consider the tensor algebra
\[T_K(\bar{J}[1])=K\oplus \bar{J}[1]\oplus \bar{J}[1]\ten_K \bar{J}[1]\oplus\ldots=\bigoplus_{p\geq 0} (\bar{J}[1])^{\ten_K p},\]
where $J$ is viewed as complex with trivial differential and $[1]$ is the shift of complexes. 
The \emph{Koszul dual} $A^*$ of $A$ is defined as the graded $K$-dual of $T_K(\bar{J}[1])$, that is,
\begin{eqnarray}
A^*:=\widehat{T}_K(\Hom_K(\bar{J}[1],K)),\label{construction:koszul-dual}
\end{eqnarray}
where $\widehat{T}$ denotes the complete tensor algebra. 
This is a dg $k$-algebra whose multiplication is given by the tensor product and whose differential is induced by the multiplication of $\bar{J}$. Since the multiplication of $\bar{J}$ is trivial, it follows that the differential of $A^*$ is trivial. For example, if $Q$ is the Jordan quiver with the unique arrow in degree $-1$, then $A^*$ is isomorphic to the graded polynomial algebra $k[t]$ with $t$ in degree $2$.

By \cite[Lemma 3.7]{SuHao16}, there is a homotopically projective resolution $\mathbf{p}S$ of $S$ such that there is a quasi-isomorphism from $A^*$ to the dg endomorphism algebra of $\mathbf{p}S$. Thus it follows from \cite[Lemma 10.5, the ``exterior'' case]{Keller94} that the  triangle functor
\[
?\lten_{A^*}\mathbf{p}S:\cd(A^*)\longrightarrow\cd(A)
\]
restricts to a triangle equivalence
\[
\per(A^*)\longrightarrow\cd.
\]

\subsection{The description of $\cd$ in terms of graded modules}
\label{ss:derived-cat-as-orbit-cat}

We define a graded quiver $Q^{gr.op}$. Its underlying quiver is the opposite quiver of $Q$ and the grading is defined by $|\alpha^{op}|=1-|\alpha|$, where $\alpha^{op}$ is the arrow opposite to $\alpha$. Then the Koszul dual $A^*$ is exactly the complete path algebra $\widehat{kQ}{}^{gr.op}$ of $Q^{gr.op}$ viewed as a dg algebra with trivial differential. It follows that there is a triangle equivalence
\[
\per(\widehat{kQ}{}^{gr.op})\longrightarrow\cd.
\]

Let $\grmod(\widehat{kQ}{}^{gr.op})$ denote the category of finitely presented graded right $\widehat{kQ}{}^{gr.op}$-modules. This is an abelian category. Let $\langle 1\rangle$ denote the degree shifting. It extends to a triangle automorphism of $\cd^b(\grmod \widehat{kQ}{}^{gr.op})$, still denoted by $\langle 1\rangle$.
Applying \cite[Theorem 5.1 (e)]{KalckYang16a} to the graded hereditary algebra $\widehat{kQ}{}^{gr.op}$, we obtain that the orbit category $\cd^b(\grmod \widehat{kQ}{}^{gr.op})/\Sigma\langle-1\rangle$ is naturally triangulated such that the projection functor $\cd^b(\grmod\widehat{kQ}{}^{gr.op})\to\cd^b(\grmod \widehat{kQ}{}^{gr.op})/\Sigma\langle-1\rangle$ is a triangle functor and there is a triangle equivalence
\[
\cd^b(\grmod \widehat{kQ}{}^{gr.op})/\Sigma\langle-1\rangle\longrightarrow \per(\widehat{kQ}{}^{gr.op}).
\]
Therefore we have

\begin{theorem}\label{thm:derived-cat-as-orbit}
There is a triangle equivalence
\[
\cd^b(\grmod(\widehat{kQ}{}^{gr.op}))/\Sigma\langle-1\rangle\longrightarrow\cd.
\]
\end{theorem}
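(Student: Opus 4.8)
The plan is to assemble the equivalence by composing the two equivalences recalled immediately above the statement. First I would invoke Koszul duality as set up in the preceding subsection: by \cite[Lemma 3.7]{SuHao16} together with \cite[Lemma 10.5]{Keller94}, the functor $?\lten_{A^*}\mathbf{p}S$ restricts to a triangle equivalence $\per(A^*)\to\cd$, and by the identification $A^*=\widehat{kQ}{}^{gr.op}$ this reads as a triangle equivalence
\[
\per(\widehat{kQ}{}^{gr.op})\longrightarrow\cd.
\]
Second, I would apply \cite[Theorem 5.1 (e)]{KalckYang16a} to the graded hereditary algebra $\widehat{kQ}{}^{gr.op}$, which yields that the orbit category $\cd^b(\grmod \widehat{kQ}{}^{gr.op})/\Sigma\langle-1\rangle$ carries a natural triangulated structure for which the projection functor is a triangle functor, and that there is a triangle equivalence
\[
\cd^b(\grmod \widehat{kQ}{}^{gr.op})/\Sigma\langle-1\rangle\longrightarrow\per(\widehat{kQ}{}^{gr.op}).
\]
Composing the two displayed equivalences gives the desired triangle equivalence, completing the proof.

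The only points that need to be checked, rather than merely quoted, are the hypotheses of the two cited results. For the Koszul duality step one must verify that $A$ is a graded algebra of the form covered by the ``exterior'' case of \cite[Lemma 10.5]{Keller94} — namely that $S=A/\bar J$ admits a homotopically projective resolution $\mathbf{p}S$ whose dg endomorphism algebra is quasi-isomorphic to $A^*=\widehat{T}_K(\Hom_K(\bar J[1],K))$ with trivial differential — and this is exactly the content of \cite[Lemma 3.7]{SuHao16} since the multiplication of $\bar J$ vanishes in a radical-square-zero algebra. For the orbit-category step one must check that $\widehat{kQ}{}^{gr.op}$ is indeed graded hereditary (which holds because it is a complete path algebra of a graded quiver, so its global dimension is at most one) so that \cite[Theorem 5.1 (e)]{KalckYang16a} applies, and that the autoequivalence whose orbit is being taken matches $\Sigma\langle-1\rangle$; here $\Sigma$ is the suspension of the derived category and $\langle 1\rangle$ is the internal degree shift, and the particular combination $\Sigma\langle-1\rangle$ is what produces $\per$ (rather than, say, $\cd_{fd}$) on the other side.

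I expect the main obstacle to be purely bookkeeping: making sure the grading conventions are coherent across the two sources. In particular, the definition $|\alpha^{op}|=1-|\alpha|$ of the grading on $Q^{gr.op}$ is tailored precisely so that the degree-$1$ part of $\bar J[1]$ matches the arrows of $Q^{gr.op}$, and one must confirm that under this identification the suspension $\Sigma$ on $\cd(A^*)$ transported across Koszul duality corresponds to the shift appearing in \cite[Theorem 5.1 (e)]{KalckYang16a}, so that the two orbit constructions are taken with respect to the same autoequivalence $\Sigma\langle-1\rangle$. Once these compatibilities are confirmed, the theorem follows immediately by concatenating the two equivalences, and no further argument is required.
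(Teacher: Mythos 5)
Your proposal matches the paper's own argument: the theorem is obtained precisely by composing the Koszul duality equivalence $\per(\widehat{kQ}{}^{gr.op})\to\cd$ (via \cite[Lemma 3.7]{SuHao16} and \cite[Lemma 10.5]{Keller94}) with the equivalence $\cd^b(\grmod \widehat{kQ}{}^{gr.op})/\Sigma\langle-1\rangle\to\per(\widehat{kQ}{}^{gr.op})$ from \cite[Theorem 5.1 (e)]{KalckYang16a} applied to the graded hereditary algebra $\widehat{kQ}{}^{gr.op}$. Your additional checks of the hypotheses (trivial differential on $A^*$ because $\bar J{}^2=0$, hereditariness of the complete path algebra, and the grading convention $|\alpha^{op}|=1-|\alpha|$) are consistent with the paper's setup, so no further comment is needed.
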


\begin{corollary}\label{c:indecomposable-object}
The composition
\[\grmod(\widehat{kQ}{}^{gr.op})\rightarrow\cd^b(\grmod(\widehat{kQ}{}^{gr.op}))\rightarrow\cd^b(\grmod(\widehat{kQ}{}^{gr.op}))/\Sigma\langle-1\rangle\to \cd\]
detects indecomposability (that is, an object of $\grmod(\widehat{kQ}{}^{gr.op})$ is indecomposable if and only if its image in $\cd$ is indecomposable) and induces a bijection on the isomorphism classes of objects.
\end{corollary}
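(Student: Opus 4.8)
The plan is to deduce this Corollary directly from Theorem~\ref{thm:derived-cat-as-orbit} together with the general properties of orbit categories recorded in Section~\ref{ss:AR-triangle-in-orbit-category}. The key point is that $\widehat{kQ}{}^{gr.op}$ is a graded hereditary algebra, so that every object of $\cd^b(\grmod(\widehat{kQ}{}^{gr.op}))$ is isomorphic to a (finite) direct sum of shifted copies of modules; combined with the fact that the functor $\Sigma\langle-1\rangle$ acts freely enough, this will force the composition to be essentially surjective, and the hereditary property will make it full and faithful on isomorphism classes in the appropriate sense.

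First I would verify the hypotheses of Section~\ref{ss:AR-triangle-in-orbit-category} for $\ca=\cd^b(\grmod(\widehat{kQ}{}^{gr.op}))$ and $\Phi=\Sigma\langle-1\rangle$: that $\ca$ is Hom-finite Krull--Schmidt (true since $\widehat{kQ}{}^{gr.op}$ is a complete path algebra of a finite graded quiver, hence graded semiperfect, and $\grmod$ of it is a length category in each degree with finite-dimensional Hom and Ext groups between finitely presented modules), and that for fixed $X,Y$ the space $\Hom_\ca(X,\Phi^pY)$ vanishes for almost all $p$ (this follows because $\ca$ is hereditary, so $\Hom_\ca(X,\Sigma^p\langle -p\rangle Y)$ is concentrated in $p\in\{0,1\}$ as far as the $\Sigma$-direction is concerned, and the degree shift $\langle -p\rangle$ then pushes the relevant degrees out of range for $|p|$ large since $X$ and $Y$ are bounded with bounded-below generation). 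By Theorem~\ref{thm:derived-cat-as-orbit} and \cite[Theorem 5.1 (e)]{KalckYang16a} the orbit category is triangulated with $\pi$ a triangle functor, so all the conclusions of Section~\ref{ss:AR-triangle-in-orbit-category} apply: in particular $\cd$ is Krull--Schmidt and there is a bijection between $\Phi$-orbits of iso-classes of indecomposables of $\ca$ and iso-classes of indecomposables of $\cd$.

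Next I would show that the composition $\grmod(\widehat{kQ}{}^{gr.op})\to\ca$ already hits every $\Phi$-orbit of indecomposables: since $\widehat{kQ}{}^{gr.op}$ is hereditary, every indecomposable object of $\cd^b(\grmod(\widehat{kQ}{}^{gr.op}))$ is isomorphic to $\Sigma^n M$ for some $n\in\mathbb{Z}$ and some indecomposable module $M$; applying $\langle n\rangle$ (a self-equivalence of $\grmod$) and then $\Phi^{-n}=\Sigma^{-n}\langle n\rangle$ we get $\Phi^{-n}(\Sigma^n M)\cong \langle n\rangle M$, which lies in the essential image of $\grmod(\widehat{kQ}{}^{gr.op})$. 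Hence every indecomposable of $\cd$ is the image of an indecomposable module, giving essential surjectivity on iso-classes (extended to all objects by Krull--Schmidt). For injectivity on iso-classes: if two finitely presented modules $M,N$ have isomorphic images in $\cd$, then by the orbit-category bijection $\Sigma^p\langle-p\rangle M\cong N$ in $\cd^b(\grmod(\widehat{kQ}{}^{gr.op}))$ for some $p$; comparing cohomology (a module sits in cohomological degree $0$) forces $p=0$, whence $M\cong N$. The indecomposability statement then follows because $\pi$ detects indecomposability (Section~\ref{ss:AR-triangle-in-orbit-category}) and a module is indecomposable in $\cd^b(\grmod)$ iff it is indecomposable in $\grmod$.

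The main obstacle is the vanishing $\Hom_\ca(X,\Phi^pY)=0$ for $|p|\gg 0$ combined with controlling that a nonzero morphism $\Sigma^p\langle-p\rangle M\to N$ in the derived category forces $p=0$ when $M,N$ are modules: one has to be careful that $\langle-\rangle$ and $\Sigma$ really do interact as claimed on $\grmod(\widehat{kQ}{}^{gr.op})$, using that this is a \emph{graded} hereditary algebra (so $\Ext^{\geq 2}$ vanishes) and that finitely presented graded modules over a complete path algebra are bounded below in degree. Once this degree/shift bookkeeping is in place, everything else is a formal consequence of the orbit-category formalism of Section~\ref{ss:AR-triangle-in-orbit-category} and Theorem~\ref{thm:derived-cat-as-orbit}.
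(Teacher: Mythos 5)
There is a genuine gap: your argument is routed through the machinery of Section~\ref{ss:AR-triangle-in-orbit-category}, whose standing hypotheses (that $\ca$ be Hom-finite and Krull--Schmidt) are not available in the generality of this corollary. The corollary is stated for an arbitrary finite graded quiver $Q$, before any of the conditions of Lemma~\ref{lem:path-algebra=complete-path-algebra} are imposed, and the paper points out immediately afterwards that $\cd$ is in general not Hom-finite. Your justification that $\grmod(\widehat{kQ}{}^{gr.op})$ has finite-dimensional Hom spaces is simply false: take $Q$ to be a single loop in degree $1$; then $\widehat{kQ}{}^{gr.op}\cong k[[x]]$ with $x$ in degree $0$, and $\Hom(\widehat{kQ}{}^{gr.op},\widehat{kQ}{}^{gr.op})=k[[x]]$ is infinite-dimensional (nor is $\grmod$ a length category there). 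Consequently $\ca=\cd^b(\grmod(\widehat{kQ}{}^{gr.op}))$ need not satisfy the hypotheses under which Section~\ref{ss:AR-triangle-in-orbit-category} gives the correspondence between $\Phi$-orbits of indecomposables and indecomposables of $\ca/\Phi$. This undermines precisely the two places where you invoke it: the injectivity step (``isomorphic images imply $\Sigma^p\langle-p\rangle M\cong N$ in $\ca$ for some $p$'') and the claim that the projection to the orbit category detects indecomposability. The second hypothesis you check (vanishing of $\Hom_\ca(X,\Phi^pY)$ for almost all $p$) does hold, by graded heredity and boundedness of complexes, but that alone is not enough.

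For comparison, the paper's proof avoids Hom-finiteness entirely. Essential surjectivity is argued exactly as you do, by splitting a bounded complex over the graded hereditary algebra as $\bigoplus\Sigma^{a_l}M_l$ and absorbing $\Sigma^{a_l}$ into $\langle a_l\rangle$ in the orbit category. Detection of indecomposability is taken from the remark at the start of Section 2: the orbit category of an idempotent complete category is idempotent complete and the projection preserves and detects indecomposability, with no finiteness assumptions. The key difference is injectivity on isomorphism classes, which the paper proves directly: mutually inverse morphisms between modules $M,N$ in the orbit category have components $f_p\in\Hom_{\cd^b(\grmod(\widehat{kQ}{}^{gr.op}))}(M,\Sigma^pN\langle-p\rangle)$ and $g_p\in\Hom(N,\Sigma^pM\langle-p\rangle)$ that vanish unless $p\in\{0,1\}$ because the algebra is graded hereditary, so comparing the degree-$0$ components of the composites gives $f_0g_0=\id_N$ and $g_0f_0=\id_M$, i.e.\ $M\cong N$ already in $\grmod(\widehat{kQ}{}^{gr.op})$. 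If you only want the statement under conditions (i)--(iv) of Lemma~\ref{lem:path-algebra=complete-path-algebra}, your Krull--Schmidt route could be salvaged (with a correct proof of Hom-finiteness), but as stated you need an argument of this direct kind.
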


\begin{proof} 
It suffices to show that the composition of the first two functors has the desired property. First, both the first two functors detect indecomposability. Secondly, let $M$ be an object of $\cd^b(\grmod(\widehat{kQ}{}^{gr.op}))$. Then there exist $M_1,\ldots,M_n\in\grmod(\widehat{kQ}{}^{gr.op})$ and $a_1,\ldots,a_n\in \mathbb{Z}$ such that $M$ is isomorphic to $\Sigma^{a_1}M_1\oplus\ldots\oplus \Sigma^{a_n}M_n$ in $\cd^b(\grmod (\widehat{kQ}{}^{gr.op}))$. So $M$ is isomorphic to $M_1\langle a_1\rangle\oplus\ldots\oplus M_n\langle a_n\rangle$ in $\cd^b(\grmod(\widehat{kQ}{}^{gr.op}))/\Sigma\langle-1\rangle$. Thirdly, let $M,~N\in\grmod(\widehat{kQ}{}^{gr.op})$ be isomorphic in $\cd^b(\grmod(\widehat{kQ}{}^{gr.op}))/\Sigma\langle-1\rangle$. Then there exist $f=(f_p)_{p\in\mathbb{Z}}$ and $g=(g_p)_{p\in\mathbb{Z}}$ with $f_p\in\Hom_{\cd^b(\grmod(\widehat{kQ}{}^{gr.op}))}(M,\Sigma^pN\langle-p\rangle)$ and $g_p\in\Hom_{\cd^b(\grmod(\widehat{kQ}{}^{gr.op}))}(N,\Sigma^pM\langle-p\rangle)$ such that $fg=\id_N$ and $gf=\id_M$. Since $\widehat{kQ}{}^{gr.op}$ is graded hereditary, we have $f_p=0$ and $g_p=0$ unless $p=0,1$. It follows that $f_0g_0=\id_N$ and $g_0f_0=\id_M$, namely, $M$ and $N$ are isomorphic in $\grmod(\widehat{kQ}{}^{gr.op})$. This completes the proof.
\end{proof}

In general the category $\cd$ is not Hom-finite.

\begin{lemma}\label{lem:path-algebra=complete-path-algebra} The following conditions are equivalent for $Q$:
\begin{itemize}
\item[(i)] $\widehat{kQ}{}^{gr.op}=kQ^{gr.op}$;
\item[(ii)] $\forall p\in\mathbb{Z}$, $Q^{gr.op}$ has only finitely many paths of degree $p$, equivalently, $Q$ has only finitely many paths of virtual degree $p$;
\item[(iii)] $\forall p\in\mathbb{Z}$, the degree $p$ component of $\widehat{kQ}{}^{gr.op}$ is finite-dimensional over $k$;
\item[(iv)] $\cd$ is Hom-finite and Krull--Schmidt.
\end{itemize}
\end{lemma}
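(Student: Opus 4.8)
The plan is to prove the cycle of implications $(i)\Rightarrow(iii)\Rightarrow(ii)\Rightarrow(i)$ together with $(i)\Leftrightarrow(iv)$, most of which are essentially bookkeeping with the definition of the complete path algebra from Section~\ref{ss:quiver}. First I would record the key dictionary: by the definition of $Q^{gr.op}$ in Section~\ref{ss:derived-cat-as-orbit-cat}, an arrow $\alpha^{op}$ has degree $|\alpha^{op}|=1-|\alpha|=d(\alpha)$, so a path $\alpha_m^{op}\cdots\alpha_1^{op}$ in $Q^{gr.op}$ of degree $p$ corresponds bijectively to a path $\alpha_1\cdots\alpha_m$ in $Q$ whose virtual degree (in the sense of Section~\ref{ss:quiver}, summing the $d(\alpha_l)$) equals $p$; this gives the ``equivalently'' clause in (ii) for free. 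Recall also that the degree-$p$ component of $\widehat{kQ}{}^{gr.op}$ consists of possibly infinite sums of paths of degree $p$, while the degree-$p$ component of $kQ^{gr.op}$ consists of finite such sums.

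For $(i)\Rightarrow(iii)$: if the two algebras coincide, then each graded component of $\widehat{kQ}{}^{gr.op}$ equals the corresponding component of $kQ^{gr.op}$, which is spanned by the (set of) paths of that degree; but a completed component can only equal the uncompleted one when that spanning set is finite, whence the component is finite-dimensional. For $(iii)\Rightarrow(ii)$: the degree-$p$ component of $kQ^{gr.op}$ has the paths of degree $p$ as a $k$-basis, so finite-dimensionality of the completion forces finitely many such paths, and by the dictionary above finitely many paths of $Q$ of virtual degree $p$. For $(ii)\Rightarrow(i)$: if for every $p$ there are only finitely many paths of degree $p$ in $Q^{gr.op}$, then every (possibly infinite) sum of paths of degree $p$ is actually a finite sum, so each graded component of the completion already lies in $kQ^{gr.op}$, i.e.\ $\widehat{kQ}{}^{gr.op}=kQ^{gr.op}$.

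It remains to link these with (iv). For $(i)\Rightarrow(iv)$: assuming (iii), the algebra $B:=\widehat{kQ}{}^{gr.op}=kQ^{gr.op}$ is a graded hereditary algebra with finite-dimensional graded pieces, hence $\Hom$-finite in each degree; then $\grmod B$ is a $\Hom$-finite abelian category, $\cd^b(\grmod B)$ is $\Hom$-finite and Krull--Schmidt, and one checks (as in Section~\ref{ss:AR-triangle-in-orbit-category}, using that $B$ hereditary forces $\Hom_{\cd^b(\grmod B)}(M,\Sigma^pN\langle -p\rangle)=0$ for $p\neq 0,1$, exactly as in the proof of Corollary~\ref{c:indecomposable-object}) that only finitely many twists contribute to each $\Hom$-space in the orbit category; by Theorem~\ref{thm:derived-cat-as-orbit} this transports to $\cd$, which is therefore $\Hom$-finite and Krull--Schmidt. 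For the contrapositive $\neg(i)\Rightarrow\neg(iv)$: if (ii) fails, pick $p$ with infinitely many paths of degree $p$ in $Q^{gr.op}$; then the $p$-th graded component of the free module $B_B$ is infinite-dimensional, so $\Hom_{\grmod B}(B, S'\langle -p\rangle)$ is infinite-dimensional for a suitable simple $S'$, and this infinite-dimensionality persists in $\cd^b(\grmod B)$ and hence (after applying the projection, which only adds summands) in $\cd^b(\grmod B)/\Sigma\langle -1\rangle\simeq\cd$, contradicting $\Hom$-finiteness of $\cd$. The main obstacle I anticipate is the $(i)\Leftrightarrow(iv)$ part: one must be careful that passing to the orbit category does not secretly re-introduce or cancel infinite-dimensionality, which is why I would route the argument through the explicit vanishing of $\Hom_{\cd^b(\grmod B)}(M,\Sigma^pN\langle -p\rangle)$ for $p\notin\{0,1\}$ coming from hereditariness, exactly the mechanism already used in the proof of Corollary~\ref{c:indecomposable-object}.
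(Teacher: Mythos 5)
Your handling of (i)$\Leftrightarrow$(ii)$\Leftrightarrow$(iii) is fine and matches the paper, which dismisses these as immediate from the definitions. The problem is in your contrapositive $\neg(\mathrm{ii})\Rightarrow\neg(\mathrm{iv})$: the claim that $\Hom_{\grmod B}(B,S'\langle -p\rangle)$ is infinite-dimensional for a suitable simple $S'$ is false. For the free module of rank one, $\Hom_{\grmod B}(B,M)\cong M_0$ for \emph{any} graded module $M$ (a degree-zero homomorphism is determined by the image of $1$), so with $M=S'\langle -p\rangle$ a shifted simple this space has dimension at most $1$, no matter how many paths of degree $p$ there are. The infinite-dimensionality of the degree-$p$ component of $B=\widehat{kQ}{}^{gr.op}$ sits in $\Hom_{\grmod B}(B,B\langle p\rangle)\cong B_p$, not in Homs into a simple; with that replacement your transport through Theorem~\ref{thm:derived-cat-as-orbit} (the $q=0$ summand of the orbit-category Hom already contains $\Hom_{\grmod B}(B,B\langle p\rangle)$, and the functor to $\cd$ is an equivalence) does give $\neg(\mathrm{iii})\Rightarrow\neg(\mathrm{iv})$. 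Note also that in this direction you must take $B=\widehat{kQ}{}^{gr.op}$ (the completed algebra), since (i) is exactly what fails; Theorem~\ref{thm:derived-cat-as-orbit} is stated for the completed algebra, so that is consistent, but your phrase ``$B:=\widehat{kQ}{}^{gr.op}=kQ^{gr.op}$'' only makes sense in the forward direction.

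For comparison, the paper avoids the orbit category altogether for (iii)$\Leftrightarrow$(iv): since $\cd\simeq\per(\widehat{kQ}{}^{gr.op})$ is idempotent complete and generated by the free module, d\'evissage reduces Hom-finiteness (hence Krull--Schmidt) to finite-dimensionality of $\Hom_{\cd(\widehat{kQ}{}^{gr.op})}(\widehat{kQ}{}^{gr.op},\Sigma^p\widehat{kQ}{}^{gr.op})=H^p(\widehat{kQ}{}^{gr.op})$, which is precisely the degree-$p$ component, i.e.\ condition (iii). This is both directions at once and sidesteps your two auxiliary verifications (Hom- and Ext-finiteness of $\grmod B$, and the finite-twist argument in the orbit category, where in any case the vanishing for $p\neq 0,1$ only holds for stalk complexes, so for general bounded complexes you should say ``all but finitely many $p$'' after decomposing via hereditariness). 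Your route can be completed, but it is longer and, as written, the key step in the backward direction fails.
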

\begin{proof}
The equivalences (i)$\Leftrightarrow$(ii)$\Leftrightarrow$(iii) follow directly from the definitions of $kQ^{gr.op}$ and $\widehat{kQ}{}^{gr.op}$.

(iii)$\Leftrightarrow$(iv): Because $\cd$ is triangle equivalent to $\per(\widehat{kQ}{}^{gr.op})$ and is idempotent complete, (iv) is equivalent to
\begin{itemize}
\item[(iv')] $\per(\widehat{kQ}{}^{gr.op})$ is Hom-finite.
\end{itemize}
Since $\per(\widehat{kQ}{}^{gr.op})$ is generated by $\widehat{kQ}{}^{gr.op}$, it follows by d\'evissage that (iv') is equivalent to
\begin{itemize}
\item[(iv'')] $\Hom_{\cd(\widehat{kQ}{}^{gr.op})}(\widehat{kQ}{}^{gr.op},\Sigma^p \widehat{kQ}{}^{gr.op})$ is finite-dimensional over $k$ for any $p\in\mathbb{Z}$.
\end{itemize}
This is equivalent to (iii) because 
\[
\Hom_{\cd(\widehat{kQ}{}^{gr.op})}(\widehat{kQ}{}^{gr.op},\Sigma^p \widehat{kQ}{}^{gr.op})=H^p(\widehat{kQ}{}^{gr.op})
\]
is the degree $p$ component of $\widehat{kQ}{}^{gr.op}$.
\end{proof}

\subsection{The description of $\cd$ in terms of quiver representations}\label{ss:D-as-quiver-repre}
Throughout this subsection, we assume the  equivalent definitions (i), (ii), (iii) and (iv) in Lemma~\ref{lem:path-algebra=complete-path-algebra}. Condition (iii) implies that $Q$ has no oriented cycles of virtual degree $0$, \ie oriented cycles whose length and degree are equal. 
These four conditions are satisfied in the following two cases:
\begin{itemize}
\item[--] all arrows of $Q$ are in non-positive degrees;
\item[--] all arrows of $Q$ are in positive degrees and there are no oriented cycles of virtual degree $0$, for example, if all arrows of $Q$ are in degrees $\geq 2$.
\end{itemize}
Recall that in these two cases $\cd=\cd_{fd}(A)$. In this subsection we improve and generalise some results of \cite{BautistaLiu06,BekkertDrozd09} on the study of $\cd$, which focus on the case when $Q$ is concentrated in degree $0$. More precisely, we will construct two (ungraded) quivers $P$ and $\tilde{Q}$ and realise $\cd$ as a certain orbit category of the bounded derived category of finitely presented representations of $P$ (respectively, $\tilde{Q}$). We point out that the constructions of $P$ and $\tilde{Q}$ and  Lemmas~\ref{lem:walks-in-Q-vs-walks-in-P},~\ref{lem:from-modules-to-representations} and \ref{lem:relation-between-P-and-Qtilde} work for arbitrary $Q$; however, in general $P$ and $\tilde{Q}$ are not strongly locally finite.

\medskip

We first define the quiver $P$, which is infinite. Its vertices are pairs $(i,j)$, where $i\in Q_0$ and $j\in\mathbb{Z}$. Its arrows are of the form $(\alpha,j):(s(\alpha),j)\to (t(\alpha),j+d(\alpha))$, where $\alpha\in Q_1$. 

\begin{lemma}\label{lem:walks-in-Q-vs-walks-in-P}
Let $i,i'\in Q_0$ and $j,j'\in\mathbb{Z}$. The map $(\alpha,j)\mapsto \alpha$ defines a bijection from the set of arrows in $P$ from $(i,j)$ to $(i',j')$ to the set of arrows in $Q$ from $i$ to $i'$ of virtual degree $j'-j$. It extends to a bijection from the set of paths (respectively, walks) in $P$ from $(i,j)$ to $(i',j')$ to the set of paths (respectively, walks) in $Q$ from $i$ to $i'$ of virtual degree $j'-j$.
\end{lemma}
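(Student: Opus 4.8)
The plan is to unwind the definitions of $P$ and of virtual degree and then bootstrap from the arrow case to paths and walks by induction on length. First I would verify the arrow statement. By construction, an arrow of $P$ from $(i,j)$ to $(i',j')$ has the form $(\alpha,j)$ with $\alpha\in Q_1$, $s(\alpha)=i$, $t(\alpha)=i'$, and necessarily $j'=j+d(\alpha)$; conversely any arrow $\alpha\in Q_1$ with $s(\alpha)=i$, $t(\alpha)=i'$ and $d(\alpha)=j'-j$ gives rise to exactly one arrow $(\alpha,j)$ of $P$ with the prescribed source and target. Since $d(\alpha)=1-|\alpha|$ is determined by $\alpha$, the assignment $(\alpha,j)\mapsto\alpha$ is well defined, injective (the first coordinate already sees $j$ via $j'-j=d(\alpha)$, but in any case the second coordinate is forced to be $j$) and surjective onto the arrows of $Q$ from $i$ to $i'$ of virtual degree $j'-j$. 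That settles the first sentence.

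For the extension to paths, I would argue that a path in $P$ from $(i,j)$ to $(i',j')$ is a composable string of arrows $(\alpha_m,j_m)\cdots(\alpha_1,j_1)$ where composability forces $j_1=j$, $j_{l+1}=j_l+d(\alpha_l)$ for $1\le l\le m-1$, and $j'=j_m+d(\alpha_m)$; hence the intermediate indices $j_l$ are completely determined by $j$ and by $\alpha_1,\dots,\alpha_{l-1}$. Thus the data of such a path is equivalent to the data of a composable string $\alpha_m\cdots\alpha_1$ of arrows in $Q$ from $i$ to $i'$, subject to the single constraint $j'-j=\sum_{l=1}^m d(\alpha_l)=d(\alpha_m\cdots\alpha_1)$, i.e.\ a path in $Q$ from $i$ to $i'$ of virtual degree $j'-j$. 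Formally this is a clean induction on $m$: the length-$1$ case is the arrow statement, and the inductive step peels off the last arrow $(\alpha_m,j_m)$, observing that $(i,j)\to(t(\alpha_{m-1}),j_m)$ is a path of length $m-1$ while $(\alpha_m,j_m)$ is an arrow, and that $j_m$ is exactly the virtual degree $\sum_{l<m}d(\alpha_l)$ added to $j$, so the two halves match up by the arrow case and the induction hypothesis; the virtual degrees add.

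For walks the argument is identical, except that each letter $c_l$ is now a trivial path, an arrow, or a formal inverse of an arrow, and one records that $d(e_i)=0$, $d(\alpha)=1-|\alpha|$, $d(\alpha^{-1})=|\alpha|-1=-d(\alpha)$ are all compatible with the corresponding source/target conventions in both $Q$ and $P$; in $P$ the inverse of $(\alpha,j)$ runs from $(t(\alpha),j+d(\alpha))$ back to $(s(\alpha),j)$, which matches the convention $d((\alpha,j)^{-1})=-d(\alpha)$. Composability of the walk again pins down all intermediate second coordinates, and $d$ is additive along the walk, so $t(w)=(i',j')$ with $j'=s$-index $+\,d(w)$. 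I do not anticipate a genuine obstacle here; the only thing to be careful about is bookkeeping of the second coordinates $j_l$ and the sign conventions for inverse arrows, making sure the bijection is inverse to the obvious "decoration'' map $w=c_m\cdots c_1\mapsto$ the unique walk in $P$ starting at $(i,j)$ obtained by decorating each letter with the running virtual degree. Establishing that these two maps are mutually inverse, which is immediate from the determinacy of the $j_l$, completes the proof.
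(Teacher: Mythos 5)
Your proof is correct; note that the paper states this lemma without any proof, treating it as immediate from the definition of $P$ and of virtual degree, and your argument (arrow case, then induction on length with the ``decoration'' map whose intermediate second coordinates are forced) is exactly the routine verification being left implicit. The only cosmetic point is your phrase $d((\alpha,j)^{-1})=-d(\alpha)$: since $P$ is ungraded, what you mean is that traversing $(\alpha,j)^{-1}$ changes the second coordinate by $-d(\alpha)$, matching $d(\alpha^{-1})=-d(\alpha)$ in $Q$, which is clearly your intent.
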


As a consequence of Lemma~\ref{lem:walks-in-Q-vs-walks-in-P}, $P$ has no oriented cycles because $Q$ has no oriented cycles of virtual degree $0$; and $P$ is a strongly locally finite quiver in the sense of \cite{BautistaLiuPaquette13}. The assignments $(i,j)\mapsto (i,j+1)$ and $(\alpha,j)\mapsto (\alpha,j+1)$ define a quiver-automorphism $\sigma$ of  $P$. 

\begin{example}\label{ex:kronecker-1}
Let $Q$ be the graded Kronecker quiver
\[
\xymatrix@C=3pc{
1\ar@<.7ex>[r]^\alpha \ar@<-.7ex>[r]_\beta & 2
}
\]
with $|\alpha|=1$ and $|\beta|=-1$. Then $d(\alpha)=0$, $d(\beta)=2$ and $P$ is the quiver
\[{\scriptsize
\xymatrix@R=1pc@C=3pc{
\ar[ddrr] & (1,-2)\ar[dd]|(0.65){(\alpha,-2)} \ar[ddrr]|(0.3){(\beta,-2)} & (1,-1)\ar[dd]|(0.65){(\alpha,-1)} \ar[ddrr]|(0.3){(\beta,-1)} & (1,0)\ar[dd]|(0.65){(\alpha,0)} \ar[ddrr]|(0.3){(\beta,0)} & (1,1)\ar[dd]|(0.65){(\alpha,1)} \ar[ddrr]|(0.3){(\beta,1)}  & (1,2)\ar[dd]|(0.65){(\alpha,2)}  & \\
\ldots & & & & & & \ldots\\
& (2,-2) & (2,-1) & (2,0) & (2,1) & (2,2) &  &&
}
}
\]
and $\sigma$ is shifting to the right by one step.
\end{example}

Let $\Rep(P)$ denote the category of representations of $P$ over $k$ and $\rep^+(P)$ (respectively, $\rep^-(P)$) denote the full subcategory of finitely presented representations (respectively, finitely co-presented representations), see \cite[Definition 1.5]{BautistaLiuPaquette13}. 
According to \cite[Proposition 1.15]{BautistaLiuPaquette13}, $\rep^+(P)$ (respectively, $\rep^-(P)$) is a Hom-finite and Ext-finite hereditary abelian $k$-category. 
Let $\sigma^*$ be the automorphism of $\Rep(P)$ defined by the pull-back along $\sigma$. It restricts to an automorphism of $\rep^+(P)$ (respectively, $\rep^-(P)$) and induces a triangle automorphism of $\cd^b(\rep^+(P))$ (respectively, $\cd^b(\rep^-(P))$), still denoted by $\sigma^*$. The following is standard.

\begin{lemma}\label{lem:from-modules-to-representations}
There is a $k$-linear isomorphism $\grmod kQ^{gr.op}\to \rep^+(P)$ and a $k$-linear triangle isomorphism $\cd^b(\grmod kQ^{gr.op})\to\cd^b(\rep^+(P))$ which take $\langle 1\rangle$ to $\sigma^*$.
\end{lemma}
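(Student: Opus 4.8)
The plan is to exhibit the isomorphism $\grmod kQ^{gr.op}\to\rep^+(P)$ explicitly by spreading a graded module out over the vertices of $P$, and then observe that everything in sight is functorial and exact, so it passes to bounded derived categories. First I would recall that a graded right $kQ^{gr.op}$-module $M$ decomposes as $M=\bigoplus_{i\in Q_0}\bigoplus_{j\in\mathbb{Z}}M_je_i$, where $M_j$ is the degree $j$ component and $e_i$ is the trivial path at $i$. I would set $\Phi(M)$ to be the representation of $P$ whose value at the vertex $(i,j)$ is the $k$-vector space $M_{-j}e_i$ (the sign on the degree is chosen so that the arrow $(\alpha,j)$ of $P$, which exists precisely when $\alpha^{op}$ has degree $j'-j$, i.e.\ $d(\alpha)=j'-j$ by the grading convention $|\alpha^{op}|=1-|\alpha|=d(\alpha)$, acts by right multiplication by $\alpha^{op}$; the bookkeeping here is exactly Lemma~\ref{lem:walks-in-Q-vs-walks-in-P}). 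A morphism of graded modules is a degree-preserving $kQ^{gr.op}$-linear map, which unwinds precisely to a morphism of representations. The inverse functor takes a representation $V$ of $P$ to the graded module $\bigoplus_{i,j}V(i,j)$ placed in degree $-j$ at vertex $i$, with the $kQ^{gr.op}$-action read off from the structure maps of $V$; one checks $\Phi$ and this are mutually inverse on the nose.

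Next I would check that this equivalence identifies $\grmod kQ^{gr.op}$ with $\rep^+(P)$ rather than with all of $\Rep(P)$. A graded $kQ^{gr.op}$-module is finitely presented iff it is a cokernel of a map between finite direct sums of shifted copies of the indecomposable projectives $e_i kQ^{gr.op}$; under $\Phi$ the projective $e_i kQ^{gr.op}$ (suitably shifted) goes to the representation $P^\wedge_{(i,j)}$ spanned by paths out of $(i,j)$, which is exactly the standard projective generator of $\rep^+(P)$ at the vertex $(i,j)$ in the sense of \cite[Definition 1.5]{BautistaLiuPaquette13}. Since $\Phi$ is exact (it is just a regrouping of vector spaces) and sends projectives to projectives and is essentially surjective onto them, it sends finitely presented objects to finitely presented objects and conversely; hence it restricts to an equivalence of the abelian subcategories. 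Because both sides are abelian and $\Phi$ is exact in both directions, it induces a triangle equivalence on the bounded derived categories $\cd^b(\grmod kQ^{gr.op})\to\cd^b(\rep^+(P))$ by the universal property of the bounded derived category (applying the exact functor termwise to complexes).

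It remains to match $\langle 1\rangle$ with $\sigma^*$. The degree shift $\langle 1\rangle$ sends $M$ to the module $M\langle 1\rangle$ with $(M\langle 1\rangle)_j=M_{j+1}$; under $\Phi$ the component at vertex $(i,j)$ becomes $M_{-j+1}e_i=M_{-(j-1)}e_i$, which is the value of $\Phi(M)$ at $(i,j-1)$. Comparing with the definition of $\sigma$, which sends $(i,j)\mapsto(i,j+1)$, so that the pull-back $\sigma^*V$ has value $V(\sigma(i,j))=V(i,j+1)$ at $(i,j)$ --- I would track signs carefully here to confirm that $\Phi(M\langle 1\rangle)$ is $\sigma^*\Phi(M)$ (one may have to use $\sigma^{-*}$ or adjust the sign in the definition of $\Phi$, a harmless convention choice), and that the arrow actions are intertwined because $\sigma$ also sends $(\alpha,j)\mapsto(\alpha,j+1)$. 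Passing to derived categories, the induced triangle isomorphism then takes $\langle 1\rangle$ to $\sigma^*$, as claimed.

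The only mildly delicate point --- the ``main obstacle'', such as it is --- is the sign/direction bookkeeping: making the three gradings (the degree of a module, the $\mathbb{Z}$-coordinate of a vertex of $P$, and the virtual-degree labelling of arrows via $|\alpha^{op}|=d(\alpha)$) line up so that $\langle 1\rangle$ corresponds to $\sigma^*$ and not to $\sigma^{-*}=(\sigma^{-1})^*$, and so that arrows act in the correct direction (right multiplication by $\alpha^{op}$ in $kQ^{gr.op}$ versus the structure map along $(\alpha,j)$ in $P$). This is precisely the content that Lemma~\ref{lem:walks-in-Q-vs-walks-in-P} was set up to handle, which is why the statement is flagged as ``standard''; no genuinely new idea is needed beyond that lemma and the exactness of regrouping graded pieces.
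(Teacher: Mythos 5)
Your proposal is correct and follows essentially the same route as the paper, whose proof simply records the object-level formula $V_{(i,j)}=(Me_i)^j$ with the arrow $(\alpha,j)$ acting by $m\mapsto m\alpha^{op}$ and leaves the remaining verifications (morphisms, finite presentation, passage to $\cd^b$, compatibility of $\langle 1\rangle$ with $\sigma^*$) as standard. One caveat on your sign choice $V_{(i,j)}=M_{-j}e_i$: since right multiplication by $\alpha^{op}$ raises degree by $d(\alpha)$, with that convention it lands in $V_{(t(\alpha),j-d(\alpha))}$ rather than at the target $(t(\alpha),j+d(\alpha))$ of the arrow $(\alpha,j)$, so the adjustment you anticipate is genuinely needed and consists simply in using $+j$ (the paper's convention), after which the arrow actions are consistent and $\langle 1\rangle$ corresponds to $\sigma^*$ rather than to $(\sigma^{-1})^*$.
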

\begin{proof}
We define the isomorphism on objects only. For $M\in\grmod kQ^{gr.op}$, the corresponding representation $V$ of $P$ is defined by
\begin{align*}
V_{(i,j)}&=(Me_i)^j~~\text{for}~i\in Q_0, j\in\mathbb{Z},\\
V_{(\alpha,j)}&: V_{(s(\alpha),j)}\to V_{(t(\alpha),j+1-|\alpha|)},~~m\mapsto m\alpha^{op},~~\text{for}~\alpha\in Q_1,j\in\mathbb{Z},
\end{align*}
where $\alpha^{op}$ is the arrow of $Q^{gr.op}$ opposite to $\alpha$.
\end{proof}

It follows from this lemma and the results in Section~\ref{ss:derived-cat-as-orbit-cat} that the orbit category $\cd^b(\rep^+(P))/\Sigma(\sigma^*)^{-1}$ is naturally triangulated such that the projection functor $\cd^b(\rep^+(P))\to\cd^b(\rep^+(P))/\Sigma(\sigma^*)^{-1}$ is a triangle functor. Moreover,

\begin{corollary}\label{cor:D-as-orbit-quiver-description}
\begin{itemize}
\item[(a)]
There is a triangle equivalence
\[\cd^b(\rep^+(P))/\Sigma(\sigma^*)^{-1}\longrightarrow\cd.\]
\item[(b)]
The composition
\[\rep^+(P)\rightarrow\cd^b(\rep^+(P))\rightarrow\cd^b(\rep^+(P))/\Sigma(\sigma^*)^{-1}\to \cd\]
detects indecomposability and induces a bijection on the isomorphism classes of objects. 
\end{itemize}
\end{corollary}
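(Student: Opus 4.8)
The plan is to deduce Corollary~\ref{cor:D-as-orbit-quiver-description} by transporting Theorem~\ref{thm:derived-cat-as-orbit} and Corollary~\ref{c:indecomposable-object} across the equivalence supplied by Lemma~\ref{lem:from-modules-to-representations}. For part~(a), recall that by Lemma~\ref{lem:path-algebra=complete-path-algebra} the standing hypotheses (i)--(iv) imply $\widehat{kQ}{}^{gr.op}=kQ^{gr.op}$, so Theorem~\ref{thm:derived-cat-as-orbit} gives a triangle equivalence $\cd^b(\grmod(kQ^{gr.op}))/\Sigma\langle-1\rangle\to\cd$. Lemma~\ref{lem:from-modules-to-representations} provides a $k$-linear triangle isomorphism $F\colon\cd^b(\grmod kQ^{gr.op})\to\cd^b(\rep^+(P))$ which intertwines $\langle 1\rangle$ and $\sigma^*$; hence it intertwines $\Sigma\langle-1\rangle$ and $\Sigma(\sigma^*)^{-1}$. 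A $k$-linear triangle isomorphism commuting (up to natural isomorphism) with the two autoequivalences descends to a triangle equivalence of the orbit categories, and moreover carries the triangulated structure on $\cd^b(\grmod kQ^{gr.op})/\Sigma\langle-1\rangle$ obtained from \cite[Theorem 5.1 (e)]{KalckYang16a} to a triangulated structure on $\cd^b(\rep^+(P))/\Sigma(\sigma^*)^{-1}$ for which the projection is a triangle functor. Composing the induced equivalence $\cd^b(\rep^+(P))/\Sigma(\sigma^*)^{-1}\to\cd^b(\grmod kQ^{gr.op})/\Sigma\langle-1\rangle$ with the equivalence of Theorem~\ref{thm:derived-cat-as-orbit} yields the desired triangle equivalence to $\cd$.

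For part~(b), the composition in question factors, via $F$ and the isomorphism of abelian categories $\grmod kQ^{gr.op}\to\rep^+(P)$ of Lemma~\ref{lem:from-modules-to-representations}, through the composition
\[
\grmod(kQ^{gr.op})\to\cd^b(\grmod(kQ^{gr.op}))\to\cd^b(\grmod(kQ^{gr.op}))/\Sigma\langle-1\rangle\to\cd
\]
treated in Corollary~\ref{c:indecomposable-object}. Since an isomorphism of abelian categories and a triangle isomorphism both detect indecomposability and are bijective on isomorphism classes, the stated properties for the $P$-side composition follow immediately from Corollary~\ref{c:indecomposable-object}. I would spell this out by drawing the commutative square relating the two compositions through $F$ and noting that all vertical arrows are equivalences or isomorphisms.

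The only genuinely delicate point is the first one: verifying that a $k$-linear triangle equivalence $F\colon\ca\to\ca'$ together with a natural isomorphism $F\Phi\cong\Phi'F$ induces a triangle equivalence $\ca/\Phi\to\ca'/\Phi'$ compatible with the triangulated structures coming from \cite[Theorem 5.1 (e)]{KalckYang16a}. On the level of the naive orbit categories this is a routine check on the Hom-space decompositions $\bigoplus_{p}\Hom_\ca(X,\Phi^pY)$; the compatibility with the triangulated structure is cleanest to see by observing that the construction of the triangulated hull in \cite{KalckYang16a}, being functorial in the input hereditary category and its autoequivalence, transports along $F$. Here $kQ^{gr.op}$ and $P$ give the ``same'' hereditary data up to the isomorphism of Lemma~\ref{lem:from-modules-to-representations}, so this functoriality applies directly. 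I expect this bookkeeping to be the main obstacle, though it is conceptually straightforward; everything else is a formal consequence of the results already established in Section~\ref{ss:derived-cat-as-orbit-cat}.
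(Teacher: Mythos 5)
Your proposal is correct and is essentially the argument the paper intends: the corollary is deduced by transporting Theorem~\ref{thm:derived-cat-as-orbit} and Corollary~\ref{c:indecomposable-object} along Lemma~\ref{lem:from-modules-to-representations}, using that condition (i) gives $\widehat{kQ}{}^{gr.op}=kQ^{gr.op}$. Your ``delicate point'' is in fact harmless, since Lemma~\ref{lem:from-modules-to-representations} provides a strict isomorphism of categories intertwining $\langle 1\rangle$ and $\sigma^*$, so the orbit categories and their triangulated structures transport verbatim.
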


Let $\nu_P:\proj(P)\to\inj(P)$ be the Nakayama functor given in \cite[Proposition 1.19]{BautistaLiuPaquette13}, which is an equivalence. It extends to a triangle equivalence $\ch^b(\proj(P))\to\ch^b(\inj(P))$ of bounded homotopy categories. Since both $\rep^+(P)$ and $\rep^-(P)$ are hereditary, the natural embeddings $\ch^b(\proj(P))\to \cd^b(\rep^+(P))$ and $\ch^b(\inj(P))\to\cd^b(\rep^-(P))$ are both triangle equivalences. It follows that there is a triangle equivalence $\mathbb{\nu}:\cd^b(\rep^+(P))\to\cd^b(\rep^-(P))$, which commutes with the pull-back $\sigma^*$ because $\nu_P$ commutes with $\sigma^*$. So the orbit category $\cd^b(\rep^-(P))/\Sigma(\sigma^*)^{-1}$ is naturally triangulated such that the projection functor $\cd^b(\rep^-(P))\to\cd^b(\rep^-(P))/\Sigma(\sigma^*)^{-1}$ is a triangle functor. Moreover,

\begin{corollary}\label{cor:D-as-orbit-quiver-description-2}
\begin{itemize}
\item[(a)]
There is a triangle equivalence
\[\cd^b(\rep^-(P))/\Sigma(\sigma^*)^{-1}\longrightarrow\cd.\]
\item[(b)]
The composition
\[\rep^-(P)\rightarrow\cd^b(\rep^-(P))\rightarrow\cd^b(\rep^-(P))/\Sigma(\sigma^*)^{-1}\to \cd\]
detects indecomposability and induces a bijection on the isomorphism classes of objects. 
\end{itemize}
\end{corollary}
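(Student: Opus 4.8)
The plan is to deduce both parts from the corresponding results for $\rep^+(P)$ (Corollary~\ref{cor:D-as-orbit-quiver-description}) by transporting along the triangle equivalence $\nu\colon\cd^b(\rep^+(P))\to\cd^b(\rep^-(P))$ constructed in the paragraph preceding the statement. For part (a): since $\nu$ is a triangle functor it commutes with $\Sigma$, and by construction it commutes (up to natural isomorphism) with the pull-back $\sigma^*$, hence it commutes with $\Phi:=\Sigma(\sigma^*)^{-1}$. Therefore $\nu$ descends to an additive equivalence $\bar\nu\colon\cd^b(\rep^+(P))/\Phi\to\cd^b(\rep^-(P))/\Phi$ that commutes with the projection functors. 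With respect to the triangulated structure on $\cd^b(\rep^-(P))/\Phi$ — which, as noted before the corollary, is the one obtained by transport along $\bar\nu$ from the triangulated structure on $\cd^b(\rep^+(P))/\Phi$ — the functor $\bar\nu$ is then a triangle equivalence and the projection $\cd^b(\rep^-(P))\to\cd^b(\rep^-(P))/\Phi$ is a triangle functor. Composing $\bar\nu^{-1}$ with the triangle equivalence of Corollary~\ref{cor:D-as-orbit-quiver-description}(a) yields the desired triangle equivalence $\cd^b(\rep^-(P))/\Phi\to\cd$.

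For part (b), one cannot simply transport the $\rep^+(P)$ statement along $\nu$: the equivalence $\nu$ sends projectives to injectives and does not preserve the standard hearts, so $\rep^+(P)\subseteq\cd^b(\rep^+(P))$ and $\rep^-(P)\subseteq\cd^b(\rep^-(P))$ do not correspond under $\nu$. Instead I would argue directly, repeating the proof of Corollary~\ref{c:indecomposable-object} with $\grmod(\widehat{kQ}{}^{gr.op})$ replaced by $\rep^-(P)$, using that $\rep^-(P)$ is hereditary and that $\sigma^*$ restricts to an automorphism of $\rep^-(P)$. Since the last functor $\cd^b(\rep^-(P))/\Phi\to\cd$ is the equivalence of (a), it suffices to treat the composition $\rep^-(P)\hookrightarrow\cd^b(\rep^-(P))\to\cd^b(\rep^-(P))/\Phi$: both functors detect indecomposability (as in the proof of Corollary~\ref{c:indecomposable-object}); the composition hits every object up to isomorphism, because by hereditarity any object of $\cd^b(\rep^-(P))$ is a finite direct sum $\bigoplus_i\Sigma^{a_i}M_i$ with $M_i\in\rep^-(P)$ and, in the orbit category, $\Sigma\cong\sigma^*$ while $(\sigma^*)^{a_i}M_i\in\rep^-(P)$; and if $M,N\in\rep^-(P)$ become isomorphic in the orbit category, choosing mutually inverse $f=(f_p)_{p\in\mathbb{Z}}$ and $g=(g_p)_{p\in\mathbb{Z}}$ with $f_p\in\Hom_{\cd^b(\rep^-(P))}(M,\Sigma^p(\sigma^*)^{-p}N)$ and $g_p\in\Hom_{\cd^b(\rep^-(P))}(N,\Sigma^p(\sigma^*)^{-p}M)$, hereditarity of $\rep^-(P)$ forces $f_p=g_p=0$ for $p\notin\{0,1\}$, and the degree-zero parts of $fg=\id_N$ and $gf=\id_M$ give $f_0g_0=\id_N$ and $g_0f_0=\id_M$, so that $f_0$ is an isomorphism $M\to N$ in $\cd^b(\rep^-(P))$, hence in $\rep^-(P)$ by full faithfulness of the embedding.

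The only point calling for a little care is the claim in (a) that $\bar\nu$ is a genuine triangle functor: this is immediate if the triangulated structure on $\cd^b(\rep^-(P))/\Phi$ is taken to be the transport along $\bar\nu$, but if one wants to match it with the structure produced by applying \cite[Theorem 5.1 (e)]{KalckYang16a} directly via Lemma~\ref{lem:from-modules-to-representations}, one should lift $\nu$ to a dg functor — which is possible, since $\nu$ is built from the canonical dg enhancements of the bounded homotopy categories of $\proj(P)$ and $\inj(P)$ together with the additive equivalence $\nu_P$, all carrying compatible $\sigma^*$-actions — and invoke the functoriality of Keller's orbit-category (triangulated-hull) construction \cite{Keller05}. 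Beyond this the argument is a routine transcription of the one already given for $\rep^+(P)$, and part (b) genuinely requires the hereditarity of $\rep^-(P)$ rather than any transfer through $\nu$.
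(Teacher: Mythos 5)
Your argument is correct and follows exactly the route the paper intends: the paper states this corollary without a separate proof, deriving (a) by transporting Corollary~\ref{cor:D-as-orbit-quiver-description}(a) along the Nakayama equivalence $\nu$ (which commutes with $\sigma^*$, hence with $\Sigma(\sigma^*)^{-1}$), and (b) by the same hereditarity argument as in Corollary~\ref{c:indecomposable-object}, now applied to the hereditary category $\rep^-(P)$. Your observation that (b) cannot simply be transported through $\nu$ (since $\nu$ does not match the hearts) is exactly the right point of care, and your direct repetition of the argument is what the paper implicitly relies on.
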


\medskip

Assume in the rest of this subsection that $Q$ is connected. We fix a vertex $i\in Q_0$ and define the quiver $\tilde{Q}$. The vertices of $\tilde{Q}$ are equivalence classes of walks of $Q$ whose source is $i$, where two walks are equivalent if their targets are the same and they have the same virtual degree. The equivalence class of a walk $u$ is denoted by $[u]$. The arrows of $\tilde{Q}$ are of the form $\alpha^{[u]}: [u]\to [\alpha u]$, where $[u]$ runs over all vertices of $\tilde{Q}$ and $\alpha$ runs over all arrows of $Q$ with $s(\alpha)=t(u)$. This is a connected quiver. It has no oriented cycles because $Q$ has no oriented cycles of virtual degree $0$. The construction of $\tilde{Q}$ is generalised from \cite[Theorem 1.3]{BautistaLiu06}, where $Q$ is concentrated in degree $0$ and $\tilde{Q}$ is the minimal gradable covering of $Q$. However, in our general setting $\tilde{Q}$ is in general not gradable in the sense of \cite[Definition 1.1]{BautistaLiu06} and \cite[Definition 7.1]{BautistaLiu14}.

\begin{example}\label{ex:kronecker-2}
Let $Q$ be the graded Kronecker quiver
\[
\xymatrix@C=3pc{
1\ar@<.7ex>[r]^\alpha \ar@<-.7ex>[r]_\beta & 2
}
\]
with $|\alpha|=1$ and $|\beta|=-1$. Then $\tilde{Q}$ is the quiver
\[{\scriptsize
\xymatrix@R=1pc@C=3pc{
\ar[ddr] & [\beta^{-1}\alpha]\ar[dd]|{\alpha^{[\beta^{-1}\alpha]}}\ar[ddr]|{\beta^{[\beta^{-1}\alpha]}} & [e_1]\ar[dd]|{\alpha^{[e_1]}}\ar[ddr]|{\beta^{[e_1]}} & [\alpha^{-1}\beta] \ar[dd]|{\alpha^{[\alpha^{-1}\beta]}}\ar[ddr]|{\beta^{[\alpha^{-1}\beta]}}& [\alpha^{-1}\beta\alpha^{-1}\beta]\ar[dd]|{\alpha^{[\alpha^{-1}\beta\alpha^{-1}\beta]}} \ar[ddr]& & \\
 \ldots&&&&&\ldots\\
 & [\alpha\beta^{-1}\alpha] & [\alpha] & [\beta] & [\beta\alpha^{-1}\beta] & 
}
}
\]
\end{example}

\bigskip

Let $r$ be the grading period of $Q$. For the graded quiver in Examples~\ref{ex:kronecker-1} and \ref{ex:kronecker-2}, the grading period is $2$ and $P$ is the disjoint union of $2$ copies of $\tilde{Q}$. This is a general phenomenon.

\begin{lemma}\label{lem:relation-between-P-and-Qtilde} If $r=0$, then $\tilde{Q}$ is isomorphic to $Q$ as an ungraded quiver and $P$ is isomorphic to the disjoint union of $\mathbb{Z}$ copies of $Q$. If $r\geq 1$, then $P$ is isomorphic to the disjoint union of $r$ copies of $\tilde{Q}$. 
\end{lemma}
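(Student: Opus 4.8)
The plan is to analyse the connected components of $P$ and identify each of them with $\tilde{Q}$. By Lemma~\ref{lem:walks-in-Q-vs-walks-in-P}, the path-and-walk structure of $P$ is controlled entirely by the walks of $Q$ together with their virtual degrees. I would first fix the vertex $i\in Q_0$ used in the definition of $\tilde{Q}$, and for each vertex $(i',j')$ of $P$ choose a walk $u$ in $Q$ from $i$ to $i'$; by Lemma~\ref{lem:walks-in-Q-vs-walks-in-P} such $u$ lifts to a walk in $P$ from $(i,j')-$corrected to the appropriate starting height, so the connected component of $P$ containing $(i',j')$ is the same as the connected component containing $(i,j_0)$ for $j_0=j'-d(u)$. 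Hence every connected component of $P$ meets the fibre $\{(i,j):j\in\mathbb{Z}\}$ over the fixed vertex $i$, and two such vertices $(i,j_1)$ and $(i,j_2)$ lie in the same component of $P$ if and only if there is a closed walk of $Q$ at $i$ of virtual degree $j_2-j_1$. Since $Q$ is connected, by the remark after the definition of grading period the set of virtual degrees of closed walks at $i$ is exactly $r\mathbb{Z}$ (when $r=0$ this is $\{0\}$). Therefore $\pi_0(P)$ is a torsor under $\mathbb{Z}/r\mathbb{Z}$ (a single point when $r=0$... wait: when $r=0$ the fibre over $i$ has infinitely many components, one for each $j\in\mathbb{Z}$).

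So the count of components is: if $r=0$, then $(i,j_1)$ and $(i,j_2)$ are connected only when $j_1=j_2$, and one checks using Lemma~\ref{lem:walks-in-Q-vs-walks-in-P} that the component of $(i,0)$, with vertices $(i',d(u))$ for walks $u$ from $i$ to $i'$, is isomorphic to $Q$ (every arrow $\alpha\in Q_1$ with $s(\alpha)=t(u)$ lifts uniquely, and conversely; the hypothesis that $Q$ has no oriented cycle of virtual degree $0$, i.e.\ length $=$ degree, together with $r=0$ meaning $Q$ is gradable, makes the walk-to-$Q$ projection an isomorphism on each component — and simultaneously shows $\tilde{Q}\cong Q$ because the equivalence relation defining $\tilde Q$ becomes trivial). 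Thus $P$ is the disjoint union of the $\mathbb{Z}$ components of the fibre over $i$, each isomorphic to $Q$. If $r\geq 1$, then the fibre over $i$ splits into exactly $r$ components, represented by $(i,0),(i,1),\ldots,(i,r-1)$, and $\sigma$ permutes these $r$ components cyclically. It remains to produce, for one of these components, an isomorphism to $\tilde{Q}$.

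For the component $C$ of $P$ containing $(i,0)$ in the case $r\geq 1$ (and for the single component in the case $r=0$), I would define a map $\tilde{Q}\to C$ on vertices by $[u]\mapsto (t(u),d(u))$ — well-defined because $[u]=[u']$ means $t(u)=t(u')$ and $d(u)=d(u')$ — and on arrows by $\alpha^{[u]}\mapsto (\alpha, d(u))$, noting that $(\alpha,d(u))$ runs from $(s(\alpha),d(u))=(t(u),d(u))$ to $(t(\alpha),d(u)+d(\alpha))=(t(\alpha u),d(\alpha u))$, which is the image of $[\alpha u]$, exactly matching the target of $\alpha^{[u]}$. This map is a quiver morphism by construction. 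Surjectivity on vertices of $C$ follows from the first paragraph (every vertex of $C$ is $(i',j')$ with $j'=d(u)$ for some walk $u$ from $i$ to $i'$ whose $P$-lift lands in $C$); surjectivity on arrows is then clear since every arrow of $P$ out of such a vertex is of the form $(\alpha,d(u))$ with $s(\alpha)=t(u)$. Injectivity on vertices: if $(t(u),d(u))=(t(u'),d(u'))$ then by definition $[u]=[u']$. Injectivity on arrows is immediate once it holds on vertices. Hence $\tilde{Q}\cong C$, and since $\sigma$ carries $C=C_0$ onto $C_1,\ldots,C_{r-1}$ bijectively, each $C_m\cong\sigma^m(C)\cong C\cong\tilde{Q}$, giving $P\cong\bigsqcup_{m=0}^{r-1}\tilde{Q}$.

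The main obstacle I anticipate is the bookkeeping that identifies the components of $P$ with the $\mathbb{Z}/r$-orbits on the fibre over $i$, i.e.\ verifying carefully via Lemma~\ref{lem:walks-in-Q-vs-walks-in-P} that being in the same component of $P$ translates exactly to "joined by a closed walk of the appropriate virtual degree", and then invoking connectedness of $Q$ plus the definition of the grading period to pin the set of such degrees down to $r\mathbb{Z}$. Once that is in place, the isomorphism $\tilde{Q}\cong C$ is a direct unwinding of the two definitions and requires no real computation; similarly the $r=0$ statement $\tilde{Q}\cong Q$ drops out because the equivalence classes of walks become singletons on the relevant component. The action of $\sigma$ permuting the $r$ copies is immediate from $\sigma(i,j)=(i,j+1)$.
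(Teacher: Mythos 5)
Your proposal is correct and follows essentially the same route as the paper: decompose $P$ into the connected components meeting the fibre $\{(i,j)\}$, identify the component of $(i,0)$ with $\tilde{Q}$ via $[u]\mapsto(t(u),d(u))$, use $\sigma$ to transport this to the other components, and count components by the virtual degrees of closed walks at $i$ ($\{0\}$ versus $r\mathbb{Z}$), with the $r=0$ case giving $\tilde{Q}\cong Q$ because a class of walks is then determined by its target. Only a wording slip: in the $r=0$ case the equivalence classes do not ``become singletons'' but rather become determined by their targets (one class per vertex), which is what your explicit isomorphisms $\tilde{Q}\cong C_0\cong Q$ already show.
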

\begin{proof} 

For $j\in\mathbb{Z}$ let $P^j$ be the connected component of $P$ containing the vertex $(i,j)$. Then the quiver-automorphism $\sigma$ of $P$ restricts to quiver-isomorphisms $P^j\to P^{j+1}$ for any $j\in\mathbb{Z}$. Moreover, since to every vertex of $Q$ there is a walk from $i$, it follows  by Lemma~\ref{lem:walks-in-Q-vs-walks-in-P} that to every vertex of $P$ there is a walk from $(i,j)$ for some $j\in\mathbb{Z}$. Therefore, $P=\cup_{j\in\mathbb{Z}} P^j$.

We show that $\tilde{Q}$ is isomorphic to $P^0$. Define a quiver-morphism $\rho:\tilde{Q}\to P$ by $\rho([u])=(t(u),d(u))$ and $\rho(\alpha^{[u]})=(\alpha,t(u))$. This map is injective on both vertices and arrows because an equivalence class of walks is determined by its target and its virtual degree. By Lemma~\ref{lem:walks-in-Q-vs-walks-in-P}, $(i',j)$ is a vertex of $P^0$ if and only if there is a walk $u$ in $Q$ from $i$ to $i'$ of virtual degree $j$. In this case, $(i',j)=\rho([u])$. Moreover, $(\alpha,j)$ is an arrow of $P^0$ if and only if $(s(\alpha),j)$ is a vertex of $P^0$ if and only if there is a walk $u$ in $Q$ from $i$ to $s(\alpha)$ of virtual degree $j$. In this case, $(\alpha,j)=\rho(\alpha^{[u]})$. Therefore, the image of $\rho$ is exactly $P^0$ and $\rho$ induces a quiver-isomorphism from $\tilde{Q}$ to $P^0$. 

(1) Assume that $r=0$. Then it follows from Lemma~\ref{lem:walks-in-Q-vs-walks-in-P} that for $j\neq 0$ the vertex $(i,j)$ does not belong to $P^0$, because there are no closed walks in $Q$ at $i$ of virtual degree $j$. So $P^j$ is different from $P^0$ for $j\neq 0$ and by applying the quiver-automorphism $\sigma$ we deduce that the $P^j$'s ($j\in\mathbb{Z}$) are pairwise different but all of them are isomorphic to $\tilde{Q}$. So  $P$ is isomorphic to the disjoint union of $\mathbb{Z}$ copies of $\tilde{Q}$.

Moreover, for any vertex $i'$ of $Q$, all walks from $i$ to $i'$ are of the same virtual degree. Therefore an equivalence class of walks is determined by its target. It follows that the quiver-morphism $\tilde{Q}\to Q$, $[u]\mapsto t(u),~\alpha^{[u]}\mapsto \alpha$ is an isomorphism.
As a consequence, $P$ is isomorphic to the disjoint union of $\mathbb{Z}$ copies of $Q$.

(2) Assume that $r\geq 1$. Then it follows from Lemma~\ref{lem:walks-in-Q-vs-walks-in-P} that for $j\in\mathbb{Z}$ the vertex $(i,j)$ belongs to $P^0$ if and only if $j$ is a multiple of $r$, because all closed walks in $Q$ at $i$ are of virtual degree a multiple of $r$. So $P^j=P^0$ if and only if $j$ is a multiple of $r$ and by applying the quiver-automorphism $\sigma$ we deduce that $P^j=P^{j'}$ if and only if $j-j'$ is a multiple of $r$. Hence $P$ is the disjoint union of $P^0,\ldots,P^{r-1}$, all of which are isomorphic to $\tilde{Q}$. Therefore $P$ is isomorphic to the disjoint union of $r$ copies of $\tilde{Q}$.
\end{proof}

\begin{remark}\label{rem:covering-is-infinite}\label{ex:covering-of-type-affine-A}
\begin{itemize}
\item[(a)]
By Lemma~\ref{lem:relation-between-P-and-Qtilde}, $\tilde{Q}$ is finite if $r=0$ and infinite if $r\geq 1$.
\item[(b)]
The quiver-morphism $\tilde{Q}\to Q$, $[u]\mapsto t(u),~\alpha^{[u]}\mapsto \alpha$ is a covering (see for example \cite[Section 1]{BautistaLiu06} for the definition of covering). 
\item[(c)] 
Assume $r\geq 1$. Then $Q$ is of type $\tilde{\mathbb{A}}$ if and only if $\tilde{Q}$ is of type $\mathbb{A}^\infty_\infty$; $Q$ is a graded oriented cycle if and only if $\tilde{Q}$ is the linear quiver of type $\mathbb{A}^\infty_\infty$.
\end{itemize}
\end{remark}

We identify $\tilde{Q}$ as the connected component $P^0$ of $P$ containing the vertex $(i,0)$ via the quiver-morphism $\rho$ defined in the proof of Lemma~\ref{lem:relation-between-P-and-Qtilde}.

Assume that $r=0$. We identify $Q$ with $\tilde{Q}$. Then $Q$ has no oriented cycles and $P$ is the disjoint union of $\sigma^j Q$, $j\in\mathbb{Z}$. So $\cd^b(\rep^+(P))$ is the direct sum of $\mathbb{Z}$ copies of $\cd^b(\rep (Q))$. Here we forget the grading on $Q$ and denote by $\rep(Q)$ the category of finite-dimensional representations over $Q$. It follows from Lemma~\ref{lem:contracting-objects-2} that the composition
\[
\cd^b(\rep(Q))\to \cd^b(\rep^+(P))\to\cd^b(\rep^+(P))/\Sigma(\sigma^*)^{-1},
\]
where the first functor is extension by $0$, is a triangle equivalence. So by Corollary~\ref{cor:D-as-orbit-quiver-description} we have the following corollary.

\begin{corollary}\label{cor:D-as-orbit-quiver-description-r=0} Assume that $r=0$. 
Then there is a triangle equivalence 
\[
\cd^b(\rep(Q))\longrightarrow \cd.
\]
\end{corollary}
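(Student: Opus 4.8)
The plan is to deduce Corollary~\ref{cor:D-as-orbit-quiver-description-r=0} by combining Corollary~\ref{cor:D-as-orbit-quiver-description}(a) with the structural description of $P$ from Lemma~\ref{lem:relation-between-P-and-Qtilde} and the abstract ``orbit-contraction'' result of Lemma~\ref{lem:contracting-objects-2}. The point is that in the case $r=0$ the quiver $P$ decomposes as a disjoint union of $\mathbb{Z}$ copies of $\tilde{Q}$, and (after identifying $\tilde{Q}$ with $Q$, which is legitimate by the $r=0$ part of Lemma~\ref{lem:relation-between-P-and-Qtilde}) the automorphism $\sigma$ permutes these copies cyclically by a shift-by-one. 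Passing to finitely presented representations and then to bounded derived categories turns this into exactly the setup of Section~\ref{ss:contracting-orbits-2}, with $\ca=\cd^b(\rep(Q))$, $\cb=\cd^b(\rep^+(P))$, and $\Psi=\Sigma(\sigma^*)^{-1}$.

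Concretely, first I would record that $\rep^+(P)=\bigoplus_{j\in\mathbb{Z}}\rep(P^j)$ since $P=\bigsqcup_{j\in\mathbb{Z}}P^j$ is a disjoint union (a finitely presented representation of a disjoint union of quivers is a finite direct sum of finitely presented representations of the components, and here each $P^j$ is finite of type $Q$, so $\rep^+(P^j)=\rep(P^j)\cong\rep(Q)$). Taking bounded derived categories, $\cd^b(\rep^+(P))\cong\bigoplus_{j\in\mathbb{Z}}\cd^b(\rep(Q))$; this is the category $\cb$ of Section~\ref{ss:contracting-orbits-2} with $\ca=\cd^b(\rep(Q))$ and $\ca_j=\cd^b(\rep(P^j))$. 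Second, I would check that the triangle automorphism $\Psi=\Sigma(\sigma^*)^{-1}$ restricts to equivalences $\ca_j\to\ca_{j+1}$: the suspension $\Sigma$ preserves each component, while $(\sigma^*)^{-1}$ sends $\rep(P^j)$ to $\rep(P^{j+1})$ because $\sigma$ maps $P^j$ isomorphically onto $P^{j+1}$; hence the composite does too. This is exactly the hypothesis of Lemma~\ref{lem:contracting-objects-2}, so that lemma gives a $k$-linear equivalence $\bar\iota:\cd^b(\rep(Q))\to\cd^b(\rep^+(P))/\Sigma(\sigma^*)^{-1}$, where $\iota$ is extension by $0$ into the $0$-th component $P^0$. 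Third, composing $\bar\iota$ with the triangle equivalence $\cd^b(\rep^+(P))/\Sigma(\sigma^*)^{-1}\to\cd$ of Corollary~\ref{cor:D-as-orbit-quiver-description}(a) yields the asserted equivalence $\cd^b(\rep(Q))\to\cd$.

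One subtlety to address is that Lemma~\ref{lem:contracting-objects-2} only produces a $k$-linear equivalence, whereas the statement claims a \emph{triangle} equivalence; so I would note that the composite $\cd^b(\rep(Q))\to\cd^b(\rep^+(P))\to\cd^b(\rep^+(P))/\Sigma(\sigma^*)^{-1}$ is a composition of triangle functors (the first being the inclusion of a direct summand, which is exact and commutes with $\Sigma$; the second being the projection functor, which is a triangle functor by the discussion following Lemma~\ref{lem:from-modules-to-representations}), hence is itself a triangle functor, and an equivalence of triangulated categories which is a triangle functor is automatically a triangle equivalence. Composing with the triangle equivalence of Corollary~\ref{cor:D-as-orbit-quiver-description}(a) keeps it a triangle equivalence. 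The main obstacle, such as it is, is purely bookkeeping: one must make sure the identification $\cd^b(\rep^+(P))=\bigoplus_j\cd^b(\rep(Q))$ is compatible with $\Sigma(\sigma^*)^{-1}$ in the precise index-shifting sense demanded by Section~\ref{ss:contracting-orbits-2}, i.e. that $\Psi$ genuinely moves the $j$-th summand to the $(j+1)$-st; once the action of $\sigma$ on connected components of $P$ (shift by one, as in Example~\ref{ex:kronecker-1}) is pinned down via Lemma~\ref{lem:relation-between-P-and-Qtilde}, this is immediate. No genuinely hard step is involved; the work has all been done in Lemmas~\ref{lem:contracting-objects-2} and~\ref{lem:relation-between-P-and-Qtilde} and Corollary~\ref{cor:D-as-orbit-quiver-description}, and this corollary is the place where they are assembled.
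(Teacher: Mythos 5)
Your proposal is correct and follows essentially the same route as the paper: identify $\tilde{Q}$ with $Q$ via Lemma~\ref{lem:relation-between-P-and-Qtilde}, decompose $\cd^b(\rep^+(P))$ as a $\mathbb{Z}$-indexed direct sum of copies of $\cd^b(\rep(Q))$ shifted by $\sigma$, apply Lemma~\ref{lem:contracting-objects-2} with $\Psi=\Sigma(\sigma^*)^{-1}$, and compose with the triangle equivalence of Corollary~\ref{cor:D-as-orbit-quiver-description}(a). Your extra remark on why the resulting equivalence is a triangle equivalence is a welcome bit of care that the paper leaves implicit.
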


Assume that $r\geq 1$. Then $P$ is the disjoint union of $\sigma^j \tilde{Q}$, $j=0,\ldots,r-1$. So $\cd^b(\rep^+(P))$ is the direct sum of $r$ copies of $\cd^b(\rep^+(\tilde{Q}))$. Let $w$ be a closed walk of $Q$ at the vertex $i$ of degree $r$. Then the map $[u]\to [uw]$ extends to a quiver isomorphism $s$ of $\tilde{Q}$, which is exactly $\sigma^r$ restricted to $\tilde{Q}$. It follows from Lemma~\ref{lem:contracting-orbits} that there is a commutative diagram of $k$-linear functors
\[
\xymatrix{
\cd^b(\rep^+(\tilde{Q}))\ar[r]\ar[d] & \cd^b(\rep^+(P))\ar[d]\\
\cd^b(\rep^+(\tilde{Q}))/\Sigma^r (s^*)^{-1}\ar[r] & \cd^b(\rep^+(P))/\Sigma(\sigma^*)^{-1},
}
\]
where the top horizontal functor is extension by $0$,  the vertical functors are the projection functors and the bottom functor is an equivalence.  Thus on $\cd^b(\rep^+(\tilde{Q}))/\Sigma^r (s^*)^{-1}$ there is a triangle structure such that the above diagram becomes a commutative diagram of $k$-linear triangle functors. Similarly, there is a commutative diagram of $k$-linear triangle functors
\[
\xymatrix{
\cd^b(\rep^-(\tilde{Q}))\ar[r]\ar[d] & \cd^b(\rep^-(P))\ar[d]\\
\cd^b(\rep^-(\tilde{Q}))/\Sigma^r (s^*)^{-1}\ar[r] & \cd^b(\rep^-(P))/\Sigma(\sigma^*)^{-1},
}
\]
where the top horizontal functor is extension by $0$, the vertical functors are the projection functors and the bottom functor is an equivalence. The next result follows from Corollaries~\ref{cor:D-as-orbit-quiver-description} and~\ref{cor:D-as-orbit-quiver-description-2}.

\begin{corollary} \label{cor:D-as-orbit-quiver-description-3}
Assume that $r\geq 1$. 
Then there are triangle equivalences
\[
\cd^b(\rep^+(\tilde{Q}))/\Sigma^r(s^*)^{-1}\longrightarrow\cd
\]
and
\[
\cd^b(\rep^-(\tilde{Q}))/\Sigma^r(s^*)^{-1}\longrightarrow\cd.
\]
\end{corollary}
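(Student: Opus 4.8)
The plan is to combine the two commutative diagrams displayed just before the statement with Corollaries~\ref{cor:D-as-orbit-quiver-description} and~\ref{cor:D-as-orbit-quiver-description-2}; the only genuine work is to check that Lemma~\ref{lem:contracting-orbits} applies to the setup at hand.

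First I would set $\ca=\cd^b(\rep^+(\tilde{Q}))$ and $\cb=\cd^b(\rep^+(P))$. By Lemma~\ref{lem:relation-between-P-and-Qtilde}, when $r\geq 1$ the quiver $P$ is the disjoint union of the $r$ connected components $P^0=\tilde{Q},\ P^1=\sigma(\tilde{Q}),\ \ldots,\ P^{r-1}=\sigma^{r-1}(\tilde{Q})$, and $\sigma$ permutes these cyclically, $\sigma(P^j)=P^{j+1}$ with indices read modulo $r$. Hence $\cb$ is the direct sum of $r$ copies $\ca_0,\ldots,\ca_{r-1}$ of $\ca$, with $\ca_j\simeq\cd^b(\rep^+(\sigma^j(\tilde{Q})))$, and the pull-back $\sigma^*$ sends $\ca_{j+1}$ to $\ca_j$, so that $\Psi:=\Sigma(\sigma^*)^{-1}$ restricts to equivalences $\Psi_j:\ca_j\to\ca_{j+1}$ for $0\leq j\leq r-1$. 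Moreover, using $(\sigma^*)^r=(\sigma^r)^*$ we get $\Psi^r|_{\ca_0}=\Sigma^r(\sigma^*)^{-r}|_{\ca_0}=\Sigma^r((\sigma^r)^*)^{-1}|_{\ca_0}$, and since $\sigma^r$ restricted to $P^0=\tilde{Q}$ is exactly the isomorphism $s$, this equals the auto-equivalence $\Phi:=\Sigma^r(s^*)^{-1}$ of $\ca$. Thus all hypotheses of Lemma~\ref{lem:contracting-orbits} hold (with the integer there being the grading period $r$), and that lemma produces a $k$-linear equivalence
\[
\bar{\iota}:\cd^b(\rep^+(\tilde{Q}))/\Sigma^r(s^*)^{-1}\longrightarrow\cd^b(\rep^+(P))/\Sigma(\sigma^*)^{-1}
\]
fitting into the first commutative square displayed before the statement.

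Second, recall that the triangle structure on $\cd^b(\rep^+(\tilde{Q}))/\Sigma^r(s^*)^{-1}$ was defined precisely so that this square becomes a commutative diagram of triangle functors; in particular $\bar{\iota}$ is a triangle equivalence. Composing $\bar{\iota}$ with the triangle equivalence $\cd^b(\rep^+(P))/\Sigma(\sigma^*)^{-1}\to\cd$ of Corollary~\ref{cor:D-as-orbit-quiver-description}(a) yields the asserted triangle equivalence $\cd^b(\rep^+(\tilde{Q}))/\Sigma^r(s^*)^{-1}\to\cd$. For the second equivalence I would run the identical argument with $\rep^+$ replaced by $\rep^-$ throughout: $\rep^-(P)$ is again hereditary, $\sigma^*$ permutes the $r$ copies $\cd^b(\rep^-(\sigma^j(\tilde{Q})))$ cyclically with $\Psi^r|_{\ca_0}=\Sigma^r(s^*)^{-1}$, Lemma~\ref{lem:contracting-orbits} gives a triangle equivalence $\cd^b(\rep^-(\tilde{Q}))/\Sigma^r(s^*)^{-1}\to\cd^b(\rep^-(P))/\Sigma(\sigma^*)^{-1}$ matching the second commutative square, and composing with the triangle equivalence of Corollary~\ref{cor:D-as-orbit-quiver-description-2}(a) finishes the proof.

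I do not expect a serious obstacle, since both commutative diagrams and all the relevant triangle structures have already been constructed in the text preceding the statement. The only point requiring care is the bookkeeping that $\Psi=\Sigma(\sigma^*)^{-1}$ genuinely permutes the $r$ summands of $\cb$ cyclically and that its $r$-th power restricts to $\Phi=\Sigma^r(s^*)^{-1}$ on the zeroth summand; this is immediate from the description of $P$ as $r$ copies of $\tilde{Q}$ permuted by $\sigma$, and the precise direction of the cyclic shift is irrelevant, as a cyclic relabelling of $\ca_0,\ldots,\ca_{r-1}$ does not affect the hypotheses of Lemma~\ref{lem:contracting-orbits}.
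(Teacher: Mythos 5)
Your proposal is correct and follows essentially the same route as the paper: the text preceding the statement applies Lemma~\ref{lem:contracting-orbits} to the decomposition of $P$ into the $r$ components $\sigma^j\tilde{Q}$ (with $\Psi=\Sigma(\sigma^*)^{-1}$ and $\Psi^r|_{\ca_0}$ inducing $\Sigma^r(s^*)^{-1}$), transports the triangle structure along the resulting equivalence of orbit categories, and then composes with Corollaries~\ref{cor:D-as-orbit-quiver-description} and~\ref{cor:D-as-orbit-quiver-description-2}. Your bookkeeping of the direction of $\sigma^*$ and the identification $\sigma^r|_{\tilde{Q}}=s$ matches the paper's construction.
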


\subsection{The Auslander--Reiten quiver of $\cd$}\label{ss:AR-quiver}

Throughout this subsection we assume that $Q$ is connected and the equivalent conditions (i), (ii), (iii) and (iv) in Lemma~\ref{lem:path-algebra=complete-path-algebra} hold. We  will use results of \cite{BautistaLiuPaquette13} on the Auslander--Reiten theory of infinite quivers to describe the possible shapes of the connected components of the Auslander--Reiten quiver of $\cd$.

\begin{lemma}\label{lem:existence-of-AR-triangle-and-oritented-cycles}
The following conditions are equivalent:
\begin{itemize}
\item[(1)] $P$ (respectively, $\tilde{Q}$) has no left infinite paths;
\item[(2)] $P$ (respectively, $\tilde{Q}$) has no right infinite paths;
\item[(3)] $Q$ has no oriented cycles;
\item[(4)] $A$ as an ordinary algebra has finite global dimension;
\item[(5)] $\cd$ has Auslander--Reiten triangles.
\end{itemize}
\end{lemma}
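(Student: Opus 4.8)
The plan is to establish the cycle of implications $(1)\Leftrightarrow(2)$, $(2)\Leftrightarrow(3)$, $(3)\Leftrightarrow(4)$, and $(3)\Leftrightarrow(5)$, drawing on the structural results already at our disposal (Lemmas~\ref{lem:walks-in-Q-vs-walks-in-P} and~\ref{lem:relation-between-P-and-Qtilde}, Corollaries~\ref{cor:D-as-orbit-quiver-description}--\ref{cor:D-as-orbit-quiver-description-3}, and the orbit-category formalism of Section~\ref{ss:AR-triangle-in-orbit-category}).

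First I would handle the equivalences among $(1)$, $(2)$, and $(3)$, which are purely combinatorial. Since $P$ is the disjoint union of copies of $\tilde{Q}$ (Lemma~\ref{lem:relation-between-P-and-Qtilde}), the statements for $P$ and for $\tilde{Q}$ are literally the same, so it suffices to argue for $P$. For $(3)\Rightarrow(1)$ and $(3)\Rightarrow(2)$: if $P$ had a left (or right) infinite path, then since $P$ is strongly locally finite with finitely many $\sigma$-orbits of vertices, by the pigeonhole principle two vertices on that path would lie in the same $\sigma$-orbit, i.e.\ the path would pass through $(i',j)$ and $(i',j')$ with $j\neq j'$ for some $i'\in Q_0$; by Lemma~\ref{lem:walks-in-Q-vs-walks-in-P} the subpath between them corresponds to a nontrivial path in $Q$ from $i'$ to $i'$, i.e.\ an oriented cycle. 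Conversely, if $Q$ has an oriented cycle at a vertex $i'$, say of virtual degree $d$ (necessarily $d\neq 0$ since $Q$ has no oriented cycles of virtual degree $0$, as noted after Lemma~\ref{lem:walks-in-Q-vs-walks-in-P}), then lifting it repeatedly through $P$ via Lemma~\ref{lem:walks-in-Q-vs-walks-in-P} produces a right infinite path (if $d<0$, start from $(i',0)$ and concatenate lifts going down — one gets an infinite path in one of the two directions depending on the sign of $d$; replacing the cycle by its formal consideration in the other orientation is not available, but note a genuine oriented cycle in $Q$ yields genuine oriented concatenations in $P$ in a fixed direction). So the same oriented cycle produces both a left infinite and a right infinite path in $P$ by concatenating its lifts in both index directions, giving $(3)\Rightarrow(1)$ and $(3)\Rightarrow(2)$ together, and hence $(1)\Leftrightarrow(2)\Leftrightarrow(3)$.

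Next, $(3)\Leftrightarrow(4)$ is classical for radical-square-zero algebras: forgetting the grading, $A=kQ/J^2$ as an ordinary algebra has finite global dimension if and only if $Q$ has no oriented cycles (e.g.\ the global dimension equals $1$ plus the length of the longest path when $Q$ is acyclic, and is infinite otherwise because a cycle produces a module with an infinite minimal projective resolution). Finally, for $(3)\Leftrightarrow(5)$ I would use the quiver-representation description. When $r=0$, by Corollary~\ref{cor:D-as-orbit-quiver-description-r=0} we have $\cd\simeq\cd^b(\rep(Q))$ with $Q$ finite acyclic (condition (iii) of Lemma~\ref{lem:path-algebra=complete-path-algebra} forces $Q$ acyclic here), and this has Auslander--Reiten triangles, so $(5)$ holds; note also $r=0$ with condition (iii) implies $(3)$. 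When $r\geq 1$: by Corollary~\ref{cor:D-as-orbit-quiver-description-3}, $\cd$ is an orbit category $\cd^b(\rep^+(\tilde{Q}))/\Sigma^r(s^*)^{-1}$, and by the discussion in Section~\ref{ss:AR-triangle-in-orbit-category} (together with the fact that the relevant orbit category is Hom-finite Krull--Schmidt by Lemma~\ref{lem:path-algebra=complete-path-algebra}(iv)) the Auslander--Reiten triangles of $\cd$ are exactly the images of Auslander--Reiten triangles of $\cd^b(\rep^+(\tilde{Q}))$; hence $\cd$ has Auslander--Reiten triangles if and only if $\cd^b(\rep^+(\tilde{Q}))$ does. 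By \cite[Section 1]{BautistaLiuPaquette13} (the Auslander--Reiten theory of strongly locally finite hereditary categories, cf.\ \cite[Proposition 1.19]{BautistaLiuPaquette13} and the subsequent results there), $\cd^b(\rep^+(\tilde{Q}))$ has Auslander--Reiten triangles if and only if $\tilde{Q}$ has no left infinite paths (equivalently no right infinite paths), which by the first part is equivalent to $(3)$.

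The main obstacle I expect is pinning down precisely the statement from \cite{BautistaLiuPaquette13} that governs existence of Auslander--Reiten triangles in $\cd^b(\rep^+(\tilde{Q}))$ in terms of the absence of infinite paths — in the strongly locally finite hereditary setting, existence of almost split sequences / Auslander--Reiten triangles is exactly obstructed by infinite paths (a projective at the start of a left infinite path, or an injective at the end of a right infinite path, fails to sit in an Auslander--Reiten triangle), and one must cite the relevant proposition carefully and check that passing to $\cd^b$ and then to the orbit category preserves and reflects this. A secondary, more routine point is making sure the two directions of "infinite path in $P$ from an oriented cycle in $Q$" are set up with the correct orientation bookkeeping (the virtual degree of the cycle may be positive or negative, but in either case concatenating lifts in the appropriate direction yields the desired infinite path, and concatenating in both index-directions yields one of each kind), so that $(1)$, $(2)$, $(3)$ are genuinely equivalent rather than merely $(3)\Rightarrow(1)$ and $(3)\Rightarrow(2)$ separately.
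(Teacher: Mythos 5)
Your proof follows essentially the same route as the paper's: the equivalence of (1), (2), (3) via Lemma~\ref{lem:walks-in-Q-vs-walks-in-P} (pigeonhole on the finitely many vertices of $Q$ in one direction, concatenating shifted lifts of an oriented cycle in the other), the classical fact for (3)$\Leftrightarrow$(4), and transporting Auslander--Reiten triangles through the orbit-category equivalence of Corollary~\ref{cor:D-as-orbit-quiver-description-3} together with Section~\ref{ss:AR-triangle-in-orbit-category}. The reference you were unsure about is \cite[Theorem 7.11]{BautistaLiuPaquette13}, which says precisely that $\cd^b(\rep^+(\tilde{Q}))$ has Auslander--Reiten triangles if and only if $\tilde{Q}$ has no left and no right infinite paths; this is exactly what the paper invokes, so your flagged obstacle is resolved and the argument is correct as proposed.
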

\begin{proof}
The equivalence between (3) and (4) is well-known. 

(3)$\Rightarrow$(1): Assume that $P$ has a left infinite path. Because $Q_0$ is finite, there is an $i\in Q_0$ and $j, j'\in\mathbb{Z}$ such that there is a non-trivial path in $P$ from $(i,j)$ to $(i,j')$. By Lemma~\ref{lem:walks-in-Q-vs-walks-in-P}, there is a non-trivial path in $Q$ from $i$ to $i$, namely, an oriented cycle of $Q$ at $i$.

(1)$\Rightarrow$(3): Assume that $Q$ has an oriented cycle at a vertex $i$. Then by Lemma~\ref{lem:walks-in-Q-vs-walks-in-P} there exist $j,j'\in\mathbb{Z}$ such that there is a non-trivial path $u$ from $(i,j)$ to $(i,j')$. So $u\sigma^{j-j'}(u)\sigma^{2j-2j'}(u)\cdots$ is a left infinite path of $P$.

(2)$\Leftrightarrow$(3): Similar to (1)$\Leftrightarrow$(3).

[(1)+(2)]$\Leftrightarrow$(5): 
By \cite[Theorem 7.11]{BautistaLiuPaquette13}, [(1)+(2)] is equivalent to that $\cd^b(\rep^+(\tilde{Q}))$ has Auslander--Reiten triangles. By Corollary~\ref{cor:D-as-orbit-quiver-description-3} and Section~\ref{ss:AR-triangle-in-orbit-category}, the latter condition is equivalent to (5).
\end{proof}

Let $r$ be the grading period of $Q$. If $r=0$, then $\cd$ is triangle equivalent to $\cd^b(\rep(Q))$ by Corollary~\ref{cor:D-as-orbit-quiver-description-r=0}. So the Auslander--Reiten theory of $\cd$ is the same as the Auslander--Reiten theory of $\cd^b(\rep(Q))$. 
In the rest of this subsection we assume that $r\geq 1$. Recall that $P$ and $\tilde{Q}$ are strongly locally finite infinite quivers with quiver-automorphisms $\sigma$ and $s$, respectively. Let $\Gamma_\cd$ (respectively, $\Gamma_{\rep^+(\tilde{Q})}$, $\Gamma_{\cd^b(\rep^+(\tilde{Q}))}$, $\Gamma_{\cd^b(\rep^+(P))}$) denote the Auslander--Reiten quiver of $\cd$ (respectively, $\rep^+(\tilde{Q})$, $\cd^b(\rep^+(\tilde{Q}))$, $\cd^b(\rep^+(P))$). As a consequence of Corollaries~\ref{cor:D-as-orbit-quiver-description} and~\ref{cor:D-as-orbit-quiver-description-3}, we have

\begin{corollary}\label{cor:AR-quiver}
$\Gamma_\cd$ is the orbit quiver of $\Gamma_{\cd^b(\rep^+(P))}$ by the quiver-automorphism induced by $\Sigma(\sigma^*)^{-1}$ and is the orbit quiver of $\Gamma_{\cd^b(\rep^+(\tilde{Q}))}$ by the quiver-automorphism induced by $\Sigma^r (s^*)^{-1}$.
\end{corollary}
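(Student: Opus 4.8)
The plan is to obtain both assertions by specialising the general discussion in Section~\ref{ss:AR-triangle-in-orbit-category} to two pairs $(\ca,\Phi)$ and then transporting the conclusion along the equivalences of Corollaries~\ref{cor:D-as-orbit-quiver-description} and~\ref{cor:D-as-orbit-quiver-description-3}. For the first statement I would take $\ca=\cd^b(\rep^+(P))$ and $\Phi=\Sigma(\sigma^*)^{-1}$, so that $\ca/\Phi$ is triangle equivalent to $\cd$ by Corollary~\ref{cor:D-as-orbit-quiver-description}(a); for the second, $\ca=\cd^b(\rep^+(\tilde{Q}))$ and $\Phi=\Sigma^r(s^*)^{-1}$, so that $\ca/\Phi$ is triangle equivalent to $\cd$ by Corollary~\ref{cor:D-as-orbit-quiver-description-3}. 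In each case, once the standing hypotheses of Section~\ref{ss:AR-triangle-in-orbit-category} are checked for $(\ca,\Phi)$, that section asserts directly that $\Gamma_{\ca/\Phi}$ is the orbit quiver of $\Gamma_\ca$ under the automorphism induced by $\Phi$, which is exactly the claim after the identification $\ca/\Phi\simeq\cd$.

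So the work reduces to verifying, for each of the two pairs, the four requirements of Section~\ref{ss:AR-triangle-in-orbit-category}: (a) $\ca$ is a Hom-finite Krull--Schmidt triangulated $k$-category; (b) $\Phi$ is a $k$-linear triangle auto-equivalence of $\ca$; (c) $\ca/\Phi$ is triangulated with $\pi_\ca$ a triangle functor; (d) $\Hom_\ca(X,\Phi^p Y)$ vanishes for almost all $p\in\mathbb{Z}$. Requirement (b) is immediate, since $\sigma^*$ and $s^*$ are (triangle) automorphisms and $\Sigma$ is a triangle auto-equivalence. Requirement (c) is already on record: in the $P$-case it follows from Lemma~\ref{lem:from-modules-to-representations} together with the results of Section~\ref{ss:derived-cat-as-orbit-cat}, and in the $\tilde{Q}$-case from the triangle structure on $\cd^b(\rep^+(\tilde{Q}))/\Sigma^r(s^*)^{-1}$ constructed before Corollary~\ref{cor:D-as-orbit-quiver-description-3}. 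For (a), I would note that $P$ is strongly locally finite, hence so is its connected component $\tilde{Q}$, so $\rep^+(P)$ and $\rep^+(\tilde{Q})$ are Hom-finite hereditary abelian Krull--Schmidt categories by \cite[Proposition 1.15]{BautistaLiuPaquette13}; being hereditary, every object of the associated bounded derived category is a finite direct sum of shifts of objects of the module category, whence these derived categories are themselves Hom-finite and Krull--Schmidt.

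The one place a short computation is needed is (d), and there the hereditary structure does the work. By the splitting just used it is enough to treat $X=\Sigma^a M$ and $Y=\Sigma^b N$ with $M,N$ in the relevant module category. Since $\Sigma$ commutes with $\sigma^*$ (resp. $s^*$), one has $\Phi^p=\Sigma^{pc}(\tau^*)^{-p}$ with $(c,\tau)=(1,\sigma)$ in the first case and $(c,\tau)=(r,s)$ in the second, so $\Hom_\ca(X,\Phi^p Y)\cong\Ext^{pc+b-a}(M,(\tau^*)^{-p}N)$ computed in $\rep^+(P)$ (resp. $\rep^+(\tilde{Q})$). This group vanishes unless $pc+b-a\in\{0,1\}$, hence for all but finitely many $p$, which is (d).

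With (a)--(d) in hand, Section~\ref{ss:AR-triangle-in-orbit-category} gives that $\ca/\Phi$ is Hom-finite Krull--Schmidt and that $\Gamma_{\ca/\Phi}$ is the orbit quiver of $\Gamma_\ca$ under the automorphism induced by $\Phi$; applying this to the two pairs and identifying $\ca/\Phi$ with $\cd$ yields the two descriptions of $\Gamma_\cd$. I do not expect a real obstacle: (a)--(c) are bookkeeping with facts already established, and (d) is immediate once one reduces to objects of the module category. It is worth recalling, as stressed in Section~\ref{ss:AR-triangle-in-orbit-category}, that this argument does not require Auslander--Reiten triangles to exist on all of $\ca$, so no appeal to Lemma~\ref{lem:existence-of-AR-triangle-and-oritented-cycles} is needed.
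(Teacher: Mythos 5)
Your proposal is correct and follows essentially the same route as the paper, which obtains the corollary by applying the discussion of Section~\ref{ss:AR-triangle-in-orbit-category} to the pairs $(\cd^b(\rep^+(P)),\Sigma(\sigma^*)^{-1})$ and $(\cd^b(\rep^+(\tilde{Q})),\Sigma^r(s^*)^{-1})$ and transporting along the triangle equivalences of Corollaries~\ref{cor:D-as-orbit-quiver-description} and~\ref{cor:D-as-orbit-quiver-description-3}. Your explicit verification of the standing hypotheses (Hom-finite Krull--Schmidt via \cite[Proposition 1.15]{BautistaLiuPaquette13} and heredity, the triangulated structure on the orbit categories, and the vanishing of $\Hom(X,\Phi^pY)$ for almost all $p$) is exactly what the paper leaves implicit.
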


The Auslander--Reiten theory of $\rep^+$ and $\cd^b(\rep^+)$ of a strongly locally finite quiver is studied in \cite{BautistaLiuPaquette13}. We refer to \cite[Section 4]{BautistaLiuPaquette13} for the definitions of right-most sections and of finite wings.

\begin{theorem}\label{thm:AR-quiver}
\begin{itemize}
\item[(a)] 
If $Q$ has no oriented cycles, then $\Gamma_\cd$ has $r$ components of shape $\mathbb{Z}\tilde{Q}^{op}$ and all other components are of shape $\mathbb{Z}\mathbb{A}_\infty$. When $Q$ is of type $\tilde{\mathbb{A}}$, the number of components of shape $\mathbb{Z}\mathbb{A}_\infty$ is $2r$; otherwise, this number is infinite.
\item[(b)]
If $Q$ is a graded oriented cycle, then $\Gamma_\cd$ consists of $r$ components of shape $\tilde{Q}^{op}$ and $r$ components of shape $\mathbb{Z}\mathbb{A}_\infty$.
\item[(c)] 
If $Q$ has oriented cycles but is not a graded oriented cycle, then $\Gamma_\cd$ has $r$ isomorphic components which is a full subquiver of $\mathbb{Z}\tilde{Q}^{op}$ with a right-most section and with finite Auslander--Reiten orbits only, and all other components are of shape $\mathbb{Z}\mathbb{A}_\infty$, $\mathbb{N}^-\mathbb{A}_\infty$, $\mathbb{N}\mathbb{A}_\infty$ or finite wings. There are infinitely many finite wings.
\end{itemize}
\end{theorem}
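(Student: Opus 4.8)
The plan is to transport the question along the triangle equivalences of Corollaries~\ref{cor:D-as-orbit-quiver-description} and~\ref{cor:D-as-orbit-quiver-description-3} and then to feed in the Auslander--Reiten theory of $\rep^+$ and of its bounded derived category for strongly locally finite quivers established in~\cite{BautistaLiuPaquette13}. By Corollary~\ref{cor:AR-quiver}, $\Gamma_\cd$ is the orbit quiver of $\Gamma_{\cd^b(\rep^+(P))}$ under the automorphism induced by $\Sigma(\sigma^*)^{-1}$, equivalently the orbit quiver of $\Gamma_{\cd^b(\rep^+(\tilde{Q}))}$ under the automorphism induced by $\Sigma^r(s^*)^{-1}$. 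As $r\geq 1$, Lemma~\ref{lem:relation-between-P-and-Qtilde} writes $P=P^0\sqcup\cdots\sqcup P^{r-1}$ with each $P^j\cong\tilde{Q}$ and $\sigma$ cyclically permuting the $P^j$, so that $\cd^b(\rep^+(P))$ is the direct sum of $r$ copies of $\cd^b(\rep^+(\tilde{Q}))$ and $\Sigma(\sigma^*)^{-1}$ cyclically permutes the copies while raising cohomological degree. Thus it suffices to (i)~classify the connected components of $\Gamma_{\cd^b(\rep^+(\tilde{Q}))}$ and their shapes, and (ii)~determine the orbit structure of this automorphism on them.

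For~(i) I would invoke~\cite[Sections~4, 5 and~7]{BautistaLiuPaquette13}, organised by the trichotomy of Lemma~\ref{lem:existence-of-AR-triangle-and-oritented-cycles} and Remark~\ref{ex:covering-of-type-affine-A}(c): either $Q$ has no oriented cycles, i.e. $\tilde{Q}$ has no left (equivalently no right) infinite paths, in which case $\rep^+(\tilde{Q})=\rep^-(\tilde{Q})$ is hereditary with Auslander--Reiten sequences on both sides and $\cd^b(\rep^+(\tilde{Q}))$ has Auslander--Reiten triangles; or $\tilde{Q}$ has infinite paths, and then either $\tilde{Q}$ is the linear quiver of type $\mathbb{A}^\infty_\infty$, i.e. $Q$ is a graded oriented cycle, or not. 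In case~(a) the cited results give that $\Gamma_{\cd^b(\rep^+(\tilde{Q}))}$ has, besides its ``connecting'' components of shape $\mathbb{Z}\tilde{Q}^{op}$, only components of shape $\mathbb{Z}\mathbb{A}_\infty$, with finitely many Auslander--Reiten orbits of the latter precisely when $\tilde{Q}$ is of type $\mathbb{A}^\infty_\infty$ (there being exactly two such, accumulating at the two ends of $\tilde{Q}$); in case~(b) one reads off the $\tilde{Q}^{op}$-shaped and the $\mathbb{Z}\mathbb{A}_\infty$-shaped families; in case~(c) one obtains the components that are full subquivers of $\mathbb{Z}\tilde{Q}^{op}$ with a right-most section and finite Auslander--Reiten orbits, together with the $\mathbb{Z}\mathbb{A}_\infty$, $\mathbb{N}^-\mathbb{A}_\infty$, $\mathbb{N}\mathbb{A}_\infty$ and finite-wing components, with infinitely many of the last kind.

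For~(ii) the key observations are that $\Sigma$ shifts the cohomological degree, hence acts freely on the indecomposables and moves each component to a component of the same shape, and that $s^*$, being the pull-back along the quiver automorphism $s=\sigma^r|_{\tilde{Q}}$, preserves the cohomological degree. Working in the $P$-picture, one checks that $\Sigma(\sigma^*)^{-1}$ permutes the components of $\Gamma_{\cd^b(\rep^+(P))}$ with trivial stabilisers, and that each single $\Sigma$-periodic family of components of a fixed shape in $\Gamma_{\cd^b(\rep^+(\tilde{Q}))}$ gives rise to exactly $r$ components of that shape in $\Gamma_\cd$ (a family that remains infinite after quotienting by the $\Sigma$-twist contributes infinitely many), so that shapes are preserved in the passage to the orbit quiver. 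Combining this with~(i) yields the three assertions: in~(a), $r$ components of shape $\mathbb{Z}\tilde{Q}^{op}$ and, according as $Q$ is or is not of type $\tilde{\mathbb{A}}$, $2r$ or infinitely many of shape $\mathbb{Z}\mathbb{A}_\infty$; in~(b), $r$ of shape $\tilde{Q}^{op}$ and $r$ of shape $\mathbb{Z}\mathbb{A}_\infty$; in~(c), $r$ isomorphic copies of the right-most-section component inside $\mathbb{Z}\tilde{Q}^{op}$, with all other components of the shapes listed and infinitely many finite wings.

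The step I expect to be the main obstacle is the bookkeeping in~(ii): showing that $\Sigma^r(s^*)^{-1}$ (equivalently $\Sigma(\sigma^*)^{-1}$) never fixes a component and never identifies components of different families, so that no shape degenerates in the quotient and each relevant family produces precisely $r$ orbits; this has to be combined with carefully extracting from~\cite{BautistaLiuPaquette13} the finite list of Auslander--Reiten orbits of $\mathbb{Z}\mathbb{A}_\infty$-components when $\tilde{Q}$ has type $\mathbb{A}^\infty_\infty$ and the precise shape of the right-most-section component in case~(c). Everything else is formal, via Corollary~\ref{cor:AR-quiver} and the discussion of Auslander--Reiten triangles in orbit categories in Section~\ref{ss:AR-triangle-in-orbit-category}.
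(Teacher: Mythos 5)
Your route coincides with the paper's: pass to the orbit quiver via Corollary~\ref{cor:AR-quiver} and then quote the Auslander--Reiten theory of \cite{BautistaLiuPaquette13} for $\rep^+(\tilde{Q})$ and its bounded derived category. One point of comparison: the step you flag as the main obstacle, the orbit bookkeeping, is a one-liner in the paper. By \cite[Theorem 7.10]{BautistaLiuPaquette13}, $\Gamma_{\cd^b(\rep^+(\tilde{Q}))}$ is the disjoint union of $\Sigma^p\mathcal{R}$ and $\Sigma^p\cc_{\tilde{Q}}$ over all $p\in\mathbb{Z}$, where $\mathcal{R}$ is the union of the regular components of $\Gamma_{\rep^+(\tilde{Q})}$ and $\cc_{\tilde{Q}}$ is the connecting component; since $s^*$ preserves $\mathcal{R}$ and $\cc_{\tilde{Q}}$ and commutes with $\Sigma$, the automorphism $\Sigma^r(s^*)^{-1}$ shifts the index $p$ by $r$, so it acts freely on components and the orbit quiver is simply $\bigsqcup_{p=0}^{r-1}\bigl(\Sigma^p\mathcal{R}\sqcup\Sigma^p\cc_{\tilde{Q}}\bigr)$. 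No discussion of stabilisers is needed, and note that your phrase ``each $\Sigma$-periodic family of components gives exactly $r$ components'' is only correct in aggregate if $s^*$ happens to permute the regular components nontrivially, though the total count (which is all that matters) is unaffected.

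There is, however, a genuine gap in how you plan to extract the shapes from \cite{BautistaLiuPaquette13}. For the claim in (a) that the number of $\mathbb{Z}\mathbb{A}_\infty$-components is infinite when $Q$ is not of type $\tilde{\mathbb{A}}$, and for the list of shapes and the infinitely many wings in (c), the paper cites \cite[Theorem 6.6]{BautistaLiuPaquette13}, which applies only when $\tilde{Q}$ is not of infinite Dynkin type. Your trichotomy only separates ``$\tilde{Q}$ of type $\mathbb{A}^\infty_\infty$'' from ``not''; you never exclude the types $\mathbb{A}_\infty$ and $\mathbb{D}_\infty$, for which the regular part has only finitely many components, so your argument as written does not yet yield the stated conclusions. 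The paper closes this by observing that $\tilde{Q}$ carries the quiver-automorphism $s$ (the restriction of $\sigma^r$) of infinite order, whereas quivers of type $\mathbb{A}_\infty$ and $\mathbb{D}_\infty$ admit no such automorphism; hence if $Q$ is not of type $\tilde{\mathbb{A}}$ then $\tilde{Q}$ is not of infinite Dynkin type. You need this (or an equivalent argument) before invoking the classification results. Similarly, in (b) the assertion that the connecting piece has shape $\tilde{Q}^{op}$ rather than $\mathbb{Z}\tilde{Q}^{op}$ requires the observation that all projective and injective representations of the linear $\mathbb{A}^\infty_\infty$ quiver are infinite-dimensional, so the preprojective component consists of projectives only and there are no preinjective components; this should be made explicit rather than ``read off''.
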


\begin{proof}
Let $\mathcal{R}$ be the disjoint union of the regular components of $\Gamma_{\rep^+(\tilde{Q})}$. Then according to \cite[Theorem 7.10]{BautistaLiuPaquette13}, $\Gamma_{\cd^b(\rep^+(\tilde{Q}))}$ is the disjoint union of $\Sigma^p\mathcal{R}$ and $\Sigma^p\cc_{\tilde{Q}}$, where $p$ runs over all integers and $\cc_{\tilde{Q}}$ is the connecting component, which is glued from the preprojective component and preinjective components of $\Gamma_{\rep^+(\tilde{Q})}$. Because $s^*$ restricts to automorphisms of $\mathcal{R}$ and of $\cc_{\tilde{Q}}$, it follows from Corollary~\ref{cor:AR-quiver} that $\Gamma_\cd$ is the disjoint union of $\Sigma^p\mathcal{R}$ and $\Sigma^p\cc_{\tilde{Q}}$, where $p=0,\ldots,r-1$.

If $Q$ is of type $\tilde{\mathbb{A}}$, then $\tilde{Q}$ is of type $\mathbb{A}^\infty_\infty$. If $Q$ is not of type $\tilde{\mathbb{A}}$, then $\tilde{Q}$ is not of type $\mathbb{A}^\infty_\infty$. Since $\tilde{Q}$ has a quiver-automorphism $s$ of infinite order while quivers of type $\mathbb{D}_\infty$ have no such quiver-automorphisms, it follows that $\tilde{Q}$ is not of infinite Dynkin type. 

(a) Assume that $Q$ has no oriented cycles. Then $\tilde{Q}$ has no infinite paths, by Lemma~\ref{lem:existence-of-AR-triangle-and-oritented-cycles}. So $\cc_{\tilde{Q}}$ is of shape $\mathbb{Z}\tilde{Q}^{op}$, by \cite[Proposition 7.9(1)]{BautistaLiuPaquette13}. By \cite[Corollary 4.16]{BautistaLiuPaquette13},  all components of $\mathcal{R}$ are of shape $\mathbb{Z}\mathbb{A}_\infty$. The first statement of (a) follows. 

If $Q$ is of type $\tilde{\mathbb{A}}$, then $\tilde{Q}$ is of type $\mathbb{A}^\infty_\infty$. According to \cite[Theorem 5.17(2)]{BautistaLiuPaquette13}, $\mathcal{R}$ consists of two components of shape $\mathbb{Z}\mathbb{A}_\infty$. Therefore $\Gamma_\cd$ has exactly $2r$ components of shape $\mathbb{Z}\mathbb{A}_\infty$. If $Q$ is not of type $\tilde{\mathbb{A}}$, then $\tilde{Q}$ is not of infinite Dynkin type. According to \cite[Theorem 6.6(1)]{BautistaLiuPaquette13}, $\mathcal{R}$ consists of infinitely many components of shape $\mathbb{Z}\mathbb{A}_\infty$. Therefore $\Gamma_\cd$ has infinitely many components of shape $\mathbb{Z}\mathbb{A}_\infty$.

(b) Assume that $Q$ is a graded oriented cycle. Then $\tilde{Q}$ is the linear quiver of type $\mathbb{A}^\infty_\infty$. Since all projective representations of $\tilde{Q}$ are infinite-dimensional, it follows from \cite[Lemma 4.5(1)]{BautistaLiuPaquette13} that the preprojective component of $\Gamma_{\rep^+(\tilde{Q})}$ consists of projective representations only and is of type $\tilde{Q}^{op}$ (=$\tilde{Q}$). Moreover, $\Gamma_{\rep^+(\tilde{Q})}$ has no preinjective components because such components contain finite-dimensional injective representations but all injective representations of $\tilde{Q}$ are infinite-dimensional. By \cite[Theorem 5.17(1)]{BautistaLiuPaquette13}, $\mathcal{R}$ consists of one component of type $\mathbb{Z}\mathbb{A}_\infty$. Therefore, $\Gamma_\cd$ consists of $r$ components of shape $\tilde{Q}^{op}$ and $r$ components of shape $\mathbb{Z}\mathbb{A}_\infty$.

(c) Assume that $Q$ has oriented cycles but is not a graded oriented cycle. Then $\tilde{Q}$ is not of infinite Dynkin type. It has both left infinite paths and right infinite paths. So $\cc_{\tilde{Q}}$ is a full subquiver of $\mathbb{Z}\tilde{Q}^{op}$ which has a right-most section and which contains finite Auslander--Reiten orbits only, by \cite[Theorems 4.6(2) and 4.7(2)]{BautistaLiuPaquette13}. By \cite[Theorem 6.6(4)]{BautistaLiuPaquette13},  all components of $\mathcal{R}$ are of shape $\mathbb{Z}\mathbb{A}_\infty$, $\mathbb{N}^-\mathbb{A}_\infty$, $\mathbb{N}\mathbb{A}_\infty$ or finite wings and there are infinitely many finite wings. This proves (c).
\end{proof}

\subsection{Special case: $Q$ is concentrated in degree $0$} 
Assume that $Q$ is concentrated in degree $0$ and is connected. Recall that in this case $\cd=\cd_{fd}(A)\simeq\cd^b(\mod A)$. 
 This category was studied in \cite{BautistaLiu06,BekkertDrozd09}. In particular, the set of indecomposable objects is described in terms of quiver representations (\cite[Theorem 3.11]{BautistaLiu06} and \cite[Theorem 4.1]{BekkertDrozd09}); the Auslander--Reiten theory of $\cd^b(\mod A)$  is studied in \cite[Section 4]{BautistaLiu06} and \cite[Section 5]{BautistaLiu17} and the possible shapes of the connected components of the Auslander--Reiten quiver of $\cd^b(\mod A)$ are described there.
In this case the quiver $P$ is the opposite quiver of Bekkert--Drozd's $Q^{\Box}$ in \cite[Section 2]{BekkertDrozd09} and is the quiver $Q^{\mathbb{Z}}$ in \cite[Section 7]{BautistaLiu14}; $\tilde{Q}$ is Bautista--Liu's minimal gradable covering of $Q$ in \cite[Theorem 1.3]{BautistaLiu06} and is the opposite quiver of Bekkert--Drozd's $Q^\diamond$ in \cite[Lemma 2.6]{BekkertDrozd09}.

Assume $r\geq 1$. Consider the composition
\[
\cf': \rep^-(\tilde{Q})\to \cd^b(\rep^-(\tilde{Q}))\to \cd^b(\rep^-(\tilde{Q}))/\Sigma^r(s^*)^{-1}\to\cd^b(\mod A),
\]
where the first functor is the canonical embedding, the second functor is the projection functor and the third functor is the triangle equivalence in Corollary~\ref{cor:D-as-orbit-quiver-description-3} composed with the triangle equivalence $\cd\simeq\cd^b(\mod A)$.  In \cite[Section 3]{BautistaLiu06}, a similar functor $\cf$ is explicitly constructed (it is denoted by $\cf_\pi$ in \cite[Section 4]{BautistaLiu17}).  We do not know if $\cf'$ coincides with $\cf$, but it does enjoy many properties of $\cf$, for example, as consequences of Corollary~\ref{cor:D-as-orbit-quiver-description-3} we have
\begin{itemize}
\item[--] $\cf'$ is fully faithful if $r\neq 1$ and is not fully faithful if $r=1$ (\confer \cite[Lemma 3.9]{BautistaLiu06}).
\item[--] $\cf'(M)[i]$, where $M$ runs over a complete set of pairwise non-isomorphic indecomposable objects of $\rep^-(\tilde{Q})$  and $0\leq i\leq r-1$, is a complete set of pairwise non-isomorphic indecomposable objects of $\cd^b(\mod A)$ (\confer \cite[Theorem 3.11(2)]{BautistaLiu06} and \cite[Theorem 4.11(2)]{BautistaLiu17}).
\end{itemize}


\def\cprime{$'$}
\providecommand{\bysame}{\leavevmode\hbox to3em{\hrulefill}\thinspace}
\providecommand{\MR}{\relax\ifhmode\unskip\space\fi MR }
\providecommand{\MRhref}[2]{%
  \href{http://www.ams.org/mathscinet-getitem?mr=#1}{#2}
}
\providecommand{\href}[2]{#2}

\end{document}